\title[Seifert cobordisms and the volume conjecture]{Seifert cobordisms and the Chen-Yang volume conjecture}
\author{R. Detcherry, E.  Kalfagianni and S. Marasinghe}
\address[]{Université Bourgogne Europe, CNRS, IMB UMR 5584, F-21000 Dijon, France}
\email{renaud.detcherry@u-bourgogne.fr}
\address[]{Department of Mathematics, Michigan State University, East
Lansing, MI, 48824, USA}
\email[]{kalfagia@msu.edu}
\address[]{Department of Mathematics, Michigan State University, East
Lansing, MI, 48824, USA}
\email[]{marasin1@msu.edu}
\thanks{ MSC 57K31, 57K16, 57M25}
\begin{document}

\begin{abstract} We study the large $r$ asymptotic behavior of the Turaev-Viro  invariants   $TV_r(M; e^{\frac{2\pi i}{r}})$ of 3-manifolds with toroidal boundary, under the operation of gluing a Seifert-fibered 3-manifold along a component of $\partial M$. We show that the Turaev-Viro invariants volume conjecture is closed under this operation. As an application we prove the volume conjecture for all Seifert fibered 3-manifolds with boundary and for large classes of graph 3-manifolds.
\medskip

{\emph Keywords: simplicial volume, plumbed manifold, Seifert fibered manifold, Turaev-Viro invariants, volume conjecture.}
\end{abstract}

\maketitle

\newtheorem{innercustomgeneric}{\customgenericname}
\providecommand{\customgenericname}{}
\newcommand{\newcustomtheorem}[2]{%
  \newenvironment{#1}[1]
  {%
   \renewcommand\customgenericname{#2}%
   \renewcommand\theinnercustomgeneric{##1}%
   \innercustomgeneric
  } {\endinnercustomgeneric} }

\newcustomtheorem{customthm}{Theorem}
\newcustomtheorem{customlemma}{Lemma}
\newcustomtheorem{customprop}{Proposition}
\newcustomtheorem{customconjecture}{Conjecture}
\newcustomtheorem{customcor}{Corollary}
\newcommand{\Q}{{\mathbb{Q}}}
\newcommand{\R}{{\mathbb{R}}}
\newcommand{\Z}{{\mathbb{Z}}}
\newcommand{\N}{{\mathbb{N}}}
\newcommand{\C}{{\mathbb{C}}}

\theoremstyle{plain}
\newtheorem*{ack*}{Acknowledgements}
\newtheorem{thm}{Theorem}[section]
\newtheorem{lem}[thm]{Lemma}
\newtheorem{prop}[thm]{Proposition}
\newtheorem{defin}[thm]{Definition}
\newtheorem{cor}[thm]{Corollary}
\newtheorem{predefinition}[thm]{Definition}
\newtheorem{conjecture}[thm]{Conjecture}
\newtheorem{preremark}[thm]{Remark}
\newtheorem{pro}[thm]{Problem}
\newenvironment{remark}%
 {\begin{preremark}\upshape}{\end{preremark}} \newenvironment{definition}%
  {\begin{predefinition}\upshape}{\end{predefinition}}

\newtheorem{ex}[thm]{Example}
\newtheorem{ques}[thm]{Question}

\newtheorem*{namedtheorem}{\theoremname}
\newcommand{\theoremname}{testing}
\newenvironment{named}[1]{\renewcommand{\theoremname}{#1}\begin{namedtheorem}}{\end{namedtheorem}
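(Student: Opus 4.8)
The plan is to split the statement into a topological part, about how simplicial volume behaves under Seifert gluing, and a quantum part, about the growth rate of $TV_r$. First I would record that the simplicial volume $\|\cdot\|$ is additive under gluing along incompressible tori and vanishes on every Seifert-fibered piece; hence if $N$ is obtained from $M$ by gluing a Seifert-fibered manifold $S$ along a boundary torus $T$, then $\|N\|=\|M\|$, and the value $v_3\|N\|$ predicted by the conjecture equals $v_3\|M\|$. Thus it suffices to prove that the growth rate $\mathrm{LTV}(N):=\lim_{r}\frac{2\pi}{r}\log TV_r(N;e^{2\pi i/r})$, with the limit taken over odd $r$, exists and equals $\mathrm{LTV}(M)=v_3\|M\|$, which holds by the hypothesis on $M$.

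For the quantum part I would use the Reshetikhin-Turaev TQFT and the relation $TV_r(M)=\|RT_r(M)\|^2$ in its form for manifolds with boundary, where $RT_r(M)\in V_r(\partial M)$ is the TQFT vector. Gluing $S$ to $M$ along $T$ contracts $RT_r(M)$ and $RT_r(S)$ along the $V_r(T)$-factor, producing $RT_r(N)$ with $TV_r(N)=\|RT_r(N)\|^2$. Expanding in the finite color basis of $V_r(T)$, this contraction becomes a finite sum whose exponential rate is controlled by how the Seifert block acts on the dominant colors of $RT_r(M)$.

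The heart of the argument is to control $RT_r(S)$. Presenting $S$ through its Seifert invariants (equivalently by surgery on a fiber-adapted link) yields closed formulas for the matrix coefficients of the gluing operator in terms of the $S$- and $T$-matrices of the $SO(3)$ theory at level $r$; these are Gauss-sum and character-type expressions. I would show that they grow at most polynomially in $r$, matching $\|S\|=0$, so that the contraction can neither raise the growth rate above $\mathrm{LTV}(M)$ (an upper bound by Cauchy--Schwarz) nor, on the relevant colors, lower it. Combined with the hypothesis this gives $\mathrm{LTV}(N)=\mathrm{LTV}(M)=v_3\|M\|=v_3\|N\|$, which is the conjecture for $N$.

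The main obstacle I anticipate is the lower bound, namely excluding cancellation in the gluing sum. Since the Seifert coefficients are oscillatory, a priori the color realizing the growth of $TV_r(M)$ could be annihilated by a vanishing Seifert coefficient. I would handle this either by a positivity argument---using that the Seifert gluing, as a cobordism from $T$ to $\partial S\setminus T$, induces a nonnegative operator once folded into $TV_r=\|RT_r\|^2$---or by tracking the precise dominant color and verifying directly that the corresponding Seifert matrix coefficient is nonzero for infinitely many odd $r$, using the admissibility constraints of the $SO(3)$ theory. This arithmetic non-vanishing is the delicate point on which the whole estimate rests.
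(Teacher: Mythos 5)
Your overall strategy (additivity of simplicial volume on the topological side, polynomial control of the Seifert gluing operator on the quantum side) has the right shape, and your upper bound via Cauchy--Schwarz matches the paper's use of its subadditivity result. But the lower bound, which you correctly flag as the delicate point, is a genuine gap, and neither of your proposed fixes works as stated. Mere non-vanishing of the ``Seifert matrix coefficient'' on a dominant color is not enough: to transfer exponential (non-)growth from $TV_r(M)=||RT_r(M)||^2$ to the glued manifold you need a \emph{quantitative} bound, uniform over all of $RT_r(T)$, namely a polynomial-in-$r$ lower bound on the smallest singular value of $RT_r(S)$; and the gluing operator is not a nonnegative operator, so no positivity argument folds it away. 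The paper's key input is exactly this missing lemma (Theorem \ref{invertible}): $RT_r(S)$ is invertible for odd $r$ coprime to the multiplicities of the exceptional fibers, with $|||RT_r(S)^{-1}|||$ growing at most polynomially. This is proved not by global Gauss-sum formulas but by cutting the base orbifold along separating curves into elementary pieces (an annulus with one cone point, a two-holed torus, a one-holed M\"obius band), writing $RT_r(S)$ as a composition of the corresponding elementary cobordism operators, and bounding each factor: the cable-space case is quoted from Kumar--Melby, the $S^1\times\Sigma_{1,2}$ case has explicitly computed eigenvalues $r/(4\sin^2(2\pi j/r))$, and the nonorientable cases are reduced to it by genus-one mutation. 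The coprimality restriction on $r$, which you do not mention, is precisely where the exceptional-fiber arithmetic enters.

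Two further omissions relative to the stated corollary: you never fix the base manifold $M$ of your gluing (the paper takes $M=D^2\times S^1$, the neighborhood of a regular fiber removed from $S$, for which $TV_r=1$ exactly, so the ``limit hypothesis'' holds trivially), and you do not treat the closed case at all, where $S$ admits an orientation-reversing involution and the paper argues via the doubling identity $TV_r(S)=||RT_r(S_1)||^2=TV_r(S_1)$ for $S=S_1\cup_{\partial}\overline{S_1}$. Finally, note that the boundary torus of the removed fiber neighborhood is compressible in the solid torus, so strict additivity of simplicial volume along incompressible tori does not literally apply; the paper handles this by passing to a connected-sum decomposition (harmless here since every piece has volume zero, but it needs to be said).
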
}
\newcommand{\effie}[1]{{\color{blue}#1}}

\newcommand{\joe}[1]{{\color{red}#1}}

\section{Introduction}

Given a compact 3-manifold $M$ the Turaev-Viro invariants  $TV_r(M; q^2)$ are
 a family of real-valued
 invariants depending on  an odd integer $r\geq 3$ and a primitive $2r$-th
root of unity $q$. In this paper, we are concerned with the case of $q=e^{\frac{\pi i}{r}}$.
The  invariants were originally constructed via state sums on triangulations of 3-manifolds \cite{TuraevViro}
 and were later related  to skein-theoretic quantum constructions of
Reshetikhin-Turaev  invariants \cite{ growth6j, BePe, colJvolDKY, Roberts}. In this paper we will follow this viewpoint.
We will view  $TV_r(M; q)$ through its relation to the  skein theoretic $\mathrm{SO}_3$-TQFT  as constructed 
by Blanchet, Habegger, Masbaum and Vogel  \cite{BHMV1, BHMV2}.

All the 3-manifolds considered in this paper will be orientable and either closed or with boundary consisting of tori (i.e. toroidal boundary)
We prove the following:

\begin{thm}\label{seifertintro} Let $S$ be a Seifert fibered 3-manifold with at least two boundary components and let $M$ be any 3-manifold  with toroidal boundary.
Then, for any 3-manifold $M'$ obtained by gluing $S$  along a component of $\partial S$ to a component of $\partial M$,
there exist constants $A$ and $K>0$, and a finite set of integers $I$, such that
$$\frac{r^{-K}}{A} TV_r(M) \leqslant TV_r(M') \leqslant Ar^K TV_r(M),$$
for all odd $r$ not divisible by any $p\in I$.
\end{thm}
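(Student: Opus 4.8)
The plan is to exploit the relation $TV_r(M) = \lVert RT_r(M)\rVert^2$ between the Turaev--Viro invariants and the $\mathrm{SO}_3$ TQFT of \cite{BHMV1, BHMV2}, where $RT_r(M) \in V_r(\partial M)$ and the norm is taken with respect to the natural Hermitian form. Let $T_0$ denote the torus along which $S$ is glued to $M$, and let $\{e_i\}_{i}$ be the orthogonal color basis of $V_r(T_0)$, so that $RT_r(M) = \sum_i v_i \otimes e_i$ with $v_i \in V_r(\partial M \setminus T_0)$. Setting $TV_r(M)_i := \mu_i \lVert v_i\rVert^2 \ge 0$, where $\mu_i = \lVert e_i\rVert^2$, orthogonality of the $e_i$ gives the decomposition $TV_r(M) = \sum_i TV_r(M)_i$ over colorings of the glued torus, with every summand nonnegative. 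This reduces the theorem to comparing, color by color, the contribution of $M$ with that of $M'$.

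First I would show that gluing $S$ acts \emph{diagonally} in this basis. The point is that the regular Seifert fiber $h \subset T_0$ can be pushed into the interior of $S$, so the colored invariants of $S$ are simultaneous eigenvectors of the curve operator of $h$; choosing $\{e_i\}$ to be the eigenbasis of that operator (transported to $M$ through the gluing homeomorphism), the operator $Z(S) \in \mathrm{End}(V_r(T_0))$ encoding $S$ is diagonal, $Z(S) = \mathrm{diag}(z_i)$ with $z_i \ge 0$. Contracting $RT_r(S)$ with $RT_r(M)$ along $T_0$ and tracing over the remaining boundary tori, the off-diagonal Gram terms of $M$ drop out and one is left with
\begin{equation*}
TV_r(M') = \sum_i \rho_i\, TV_r(M)_i, \qquad \rho_i := z_i/\mu_i \ge 0,
\end{equation*}
where $\rho_i = TV_r(S)_i/\mathcal{D}_i$ is the colored Turaev--Viro invariant of $S$ (with $T_0$ colored $i$ and the remaining tori free), normalized by the weight $\mathcal{D}_i$ of the trivial filling. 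The whole statement then follows once the normalized colored Seifert invariants are pinched polynomially,
\begin{equation*}
\frac{r^{-K}}{A} \le \rho_i \le A r^K,
\end{equation*}
uniformly in $i$ and in all admissible $r$, since then $\sum_i \rho_i TV_r(M)_i$ is squeezed between $\tfrac{r^{-K}}{A} TV_r(M)$ and $Ar^K TV_r(M)$.

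It remains to establish these uniform bounds on $\rho_i$, and here I would use the explicit TQFT computation of Seifert invariants. Writing $S$ as a circle fibration over a surface $\Sigma$ with exceptional fibers of type $(\alpha_j,\beta_j)$, the quantity $\rho_i$ is a finite Verlinde-type sum over colors $a$ of products of quantum dimensions $d_a$, $S$-matrix entries $S_{ia}$, and twist coefficients $\theta_a$ raised to powers dictated by the Euler number and the $(\alpha_j,\beta_j)$. The \emph{upper} bound is then routine: the sum has $O(r)$ terms, each a product of quantities of polynomial size in $r$, so $\rho_i \le A r^K$. The \emph{main obstacle} is the \emph{lower} bound, which is exactly where the two hypotheses enter. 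Bounding $\rho_i$ below requires first that it not vanish: the assumption that $S$ has at least two boundary components forces $\Sigma$ to have at least two boundary circles, so the relevant sum is a dimension of conformal blocks and is strictly positive for every color $i$ --- this is precisely what fails for a single boundary torus (for the solid torus $D^2\times S^1$ the colored invariant already vanishes for $i\neq 0$, whereas for the annulus times $S^1$ it equals $\mu_i>0$ for all $i$). Second, the quantum-integer and $S$-matrix factors attached to the exceptional fibers vanish only when $r$ shares a common factor with the Seifert data; collecting the prime divisors of the $\alpha_j$ and of the denominators in the normalization into the finite set $I$, one obtains, for all odd $r$ not divisible by any $p \in I$, a polynomial lower bound $\rho_i \ge \tfrac{r^{-K}}{A}$ uniform in $i$. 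Combining the two bounds completes the proof.
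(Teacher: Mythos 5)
Your reduction is sound in outline: decomposing $RT_r(M)$ over a color basis of the glued torus adapted to the regular fiber $h$, observing that $RT_r(S)^{\dagger}RT_r(S)$ commutes with the curve operator of $h$ (whose eigenvalues $-2\cos(2\pi j/r)$ are distinct, so it is diagonal in that basis), and concluding $TV_r(M')=\sum_i \rho_i\, TV_r(M)_i$ is a correct and even attractive repackaging. But note that a uniform two-sided polynomial bound on the $\rho_i$ is exactly equivalent to the statement that $RT_r(S)$ is invertible with $|||RT_r(S)^{-1}|||\leqslant Cr^N$, which is the paper's Theorem \ref{invertible}; so your argument reduces the theorem to the same key lemma rather than circumventing it, and everything hinges on how you prove the lower bound.

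That is where there is a genuine gap. Your justification for $\rho_i>0$ --- that the relevant sum ``is a dimension of conformal blocks and is strictly positive'' --- is only valid for the trivial, untwisted bundle over an orientable base. Once exceptional fibers $(\alpha_j,\beta_j)$ or nontrivial twisting are present, the colored invariant is a Verlinde-type sum weighted by twist coefficients $\theta_a$ and $S$-matrix entries raised to powers determined by the Seifert data; it is a priori a complex number with no positivity, and its non-vanishing (let alone a polynomial lower bound uniform in $i$) is precisely the hard content. Your sentence ``the quantum-integer and $S$-matrix factors attached to the exceptional fibers vanish only when $r$ shares a common factor with the Seifert data'' asserts the conclusion rather than proving it: the local contribution of an exceptional fiber is a Gauss-sum-like expression whose non-vanishing and size require an actual computation. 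The paper resolves this by cutting $S$ along vertical tori into elementary cobordisms --- cable spaces over an annulus with one exceptional fiber (handled by the explicit computations of Kumar--Melby), the trivial bundle over $\Sigma_{1,2}$ (explicit eigenvalues $r/4\sin^2(2\pi j/r)$ from \cite{BHMV2}), and twisted bundles over $P_{1,2}$ and $P_{0,2}$ (handled by a genus-one mutation trick) --- and then composing the inverses. Your proposal also omits the non-orientable base case entirely (the Verlinde-type formula you invoke is for orientable $\Sigma$), and the case $n\geq 3$ where several boundary tori of $S$ remain free needs the matching-eigenvalue bookkeeping spelled out. To complete your route you would either have to carry out the exceptional-fiber estimates directly or, as the paper does, quote the cable-space computations; as written, the lower bound is not established.
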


Some of the most prominent open problems in quantum topology are the volume conjectures, asserting that geometric invariants of 3-manifolds (e.g. hyperbolic volume) are determined
by quantum invariants. Theorem \ref{seifertintro} has applications to the Turaev-Viro  invariants volume conjecture.  The conjecture, that is a natural 3-manifold generalization of the well known
Kashaev, Murakami and Murakami \cite{procvol} conjecture, asserts that the large $r$ asymptotics of the Turaev-Viro  invariants determine the simplicial volume  of 3-manifolds.
Specifically, Chen and Yang \cite{chenyang2018vol} conjectured that for hyperbolic manifolds of finite volume, the
growth rate of the Turaev-Viro invariants is exponential and it determines the hyperbolic volume
of the manifold.

By the geometrization theorem, any   3-manifold $M$ 
with empty or toroidal boundary admits a canonical decomposition, along essential spheres and tori, into pieces that are either hyperbolic or Seifert fibered spaces.
We will refer to this as the geometric decomposition of $M$.
The simplicial volume $\mathrm{Vol}(M)$ of  $M$ is defined as the sum of the volumes of the hyperbolic pieces in this decomposition and it  is equal to its Gromov norm times $v_3\approx 1.01494$ \cite{ThurstonGT3manifolds}, which is the hyperbolic volume of a regular ideal hyperbolic tetrahedron. The simplicial volume is additive under  disjoint unions and connected sums of 3-manifolds as well us  under gluing along essential tori. 
The following generalization of the Chen-Yang Conjecture was stated in  \cite{DKGromov}.

\begin{conjecture}\label{TVvolumeconj}
For every compact orientable  3-manifold $M$, with empty or toroidal boundary, we have
$$LTV(M):=\underset{r \rightarrow \infty, \ r \ \textrm{odd}}{\limsup} \frac {2\pi}  {r} \log |TV_r(M)|=\mathrm{Vol}(M),$$
where $r$ runs over all odd integers.
\end{conjecture}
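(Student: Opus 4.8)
The plan is to establish $LTV(M)=\mathrm{Vol}(M)$ by induction on the number of pieces in the geometric decomposition of $M$, using Theorem \ref{seifertintro} to peel off Seifert fibered pieces and reduce to the hyperbolic ones. The point is that both sides transfer across a Seifert gluing. Geometrically, $\mathrm{Vol}$ is additive under gluing along essential tori and vanishes on Seifert pieces, so writing $M=M_0\cup_T S$ with $S$ Seifert gives $\mathrm{Vol}(M)=\mathrm{Vol}(M_0)$. Quantum-mechanically, Theorem \ref{seifertintro} shows that attaching $S$ multiplies $TV_r$ by a factor between $r^{-K}/A$ and $Ar^{K}$ for all odd $r$ not divisible by the integers in the finite set $I$, and such factors are annihilated by $\frac{2\pi}{r}\log|\cdot|$ in the limit, so that along odd $r$ coprime to $I$,
\[
\underset{r\to\infty}{\limsup}\ \frac{2\pi}{r}\log|TV_r(M)|=\underset{r\to\infty}{\limsup}\ \frac{2\pi}{r}\log|TV_r(M_0)|.
\]
Reconciling this with the full $\limsup$ over all odd $r$ in the definition of $LTV$ requires separately bounding the arithmetic progressions removed by $I$; in the base case below this is automatic, and in general I would treat it as a technical addendum. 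Granting it, the conjecture for $M$ is equivalent to that for $M_0$, and iterating we may assume the geometric decomposition of $M$ contains no Seifert piece -- noting that Theorem \ref{seifertintro} applies only when the peeled piece has at least two boundary tori, so a Seifert piece meeting the rest of $M$ along a single torus is absorbed into the base case.

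For the base of the induction one takes $\mathrm{Vol}(M)=0$, i.e. $M$ a graph manifold, in particular a single Seifert fibered space; the goal is then to show $TV_r(M)$ is polynomially bounded in $r$ for all odd $r$, so that $LTV(M)=0$ with no arithmetic subtlety. First I would compute $TV_r$ for the elementary Seifert building blocks -- solid tori, thickened tori, and circle bundles over a disk or annulus with finitely many cone points -- in the $\mathrm{SO}_3$-TQFT picture, where these invariants are finite sums of products of quantum integers and $6j$-symbols and hence bounded by a fixed power of $r$. Iterating the closure property of Theorem \ref{seifertintro} then propagates the bound $LTV=0$ across every torus of the JSJ graph, proving the conjecture for all graph manifolds and matching $\mathrm{Vol}=0$.

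The decisive and genuinely hard case is when every geometric piece of $M$ is hyperbolic, and this is where I expect the main obstacle. Two inputs are needed that Theorem \ref{seifertintro} does not supply. The first is the single-piece Chen--Yang conjecture: for a cusped hyperbolic piece $N$ one must prove $LTV(N)=\mathrm{Vol}(N)$, which calls for a saddle-point analysis of the state sum, or of the associated Reshetikhin--Turaev partition function, showing that it is dominated by contributions of size $\exp\!\big(\tfrac{r}{2\pi}\mathrm{Vol}(N)\big)$ through the asymptotics of the quantum $6j$-symbol and the quantum dilogarithm; this is open in general and is the crux of the whole conjecture. The second is a gluing estimate along hyperbolic--hyperbolic tori, showing $LTV$ is additive under such gluings so as to reproduce the additivity of $\mathrm{Vol}$ -- morally an inequality of the shape of Theorem \ref{seifertintro} but with the polynomial factors replaced by the exponential growth contributed by fusion over the boundary torus basis. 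The honest expectation is that the Seifert technology of this paper settles the entire $\mathrm{Vol}=0$ locus, while these two hyperbolic steps remain the open core of the conjecture.
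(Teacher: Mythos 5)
The statement you were asked to prove is Conjecture \ref{TVvolumeconj} itself, which is \emph{open}: the paper contains no proof of it, only verifications in special cases (Corollary \ref{VCS} for Seifert fibered manifolds with boundary or closed with an orientation-reversing involution, and Corollary \ref{Graph} for certain plumbed manifolds with boundary). Your honest admission that the cusped-hyperbolic asymptotics and the hyperbolic--hyperbolic gluing estimate are the open core is therefore correct, and no amount of the paper's Seifert technology closes that gap; so what can be evaluated is only your reduction to the volume-zero cases, and there are concrete errors in it. First, in your base case the claim that polynomial boundedness of $TV_r(M)$ for all odd $r$ yields $LTV(M)=0$ ``with no arithmetic subtlety'' is wrong on both counts: polynomial upper bounds give only $LTV(M)\leq 0$, and the lower bound $LTV(M)\geq 0$ cannot hold along all odd $r$, since for Seifert fibered spaces $TV_r$ can vanish for infinitely many $r$ (this is exactly why the conjecture is stated with $\limsup$; see the citation of \cite{Shashini} in the introduction). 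The paper's actual route is different: drill a regular fiber to produce a Seifert piece with at least two boundary components glued to a solid torus $D^2\times S^1$, for which $TV_r=RT_r(S^2\times S^1)=1$ is an honest limit, then invoke the gluing theorem (Theorem \ref{vc}); the arithmetic restriction to $r$ coprime to the multiplicities is intrinsic to the lower inequality but harmless, because a $\limsup$ lower bound only needs one good subsequence.

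Second, your general inductive step does not work as stated. The transfer $LTV(M)=LTV(M_0)$ across a Seifert gluing is only available when the $\limsup$ for the seed manifold is a genuine limit: if $\mathrm{Vol}(M_0)>0$ and the $\limsup$ of $TV_r(M_0)$ is attained only along the arithmetic progressions excluded by $I$, the lower inequality of Theorem \ref{seifertintro} says nothing, so this is not a ``technical addendum'' but precisely the obstruction the paper flags when it restricts its transfer statement to the case where the limsup is a limit. Third, a Seifert JSJ piece meeting the rest of $M$ along a single torus and having no other boundary has only one boundary component, so Theorem \ref{seifertintro} simply does not apply to it; your remark that such a piece is ``absorbed into the base case'' is empty when the remainder of $M$ is hyperbolic, and the drilling trick does not help there because it changes $M$. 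Finally, your closing claim that the Seifert technology settles ``the entire $\mathrm{Vol}=0$ locus'' overstates the paper: Corollary \ref{Graph} requires $T(G)$ to be a tree with all but at most one leaf carrying boundary from $\partial G$, and closed graph manifolds (beyond closed Seifert spaces with an orientation-reversing involution, handled by the doubling identity $TV_r(S)=\|RT_r(S_1)\|^2=TV_r(S_1)$) remain out of reach even in the volume-zero regime.
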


The upper inequality of Theorem \ref{seifertintro}  follows from  \cite{DKGromov} and in fact it holds for all odd $r\geq 3$.
The theorem implies that if $\mathrm{Vol}(M)=0$, then
we have $LTV(M)=LTV(M')$.
 As a corollary we have the following:
 
\begin{cor}\label{VCS}  Suppose that $S$ is  an oriented Seifert fibered 3-manifold that either has a non-empty boundary, or it is closed and admits an orientation reversing involution. Then we have
$$LTV(S)=\underset{r \rightarrow \infty, \ r \ \textrm{odd}}{\limsup} \frac {2\pi}  {r} \log |TV_r(S)|=\mathrm{Vol}(S)=0.$$
\end{cor}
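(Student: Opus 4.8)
The plan is to prove both equalities $LTV(S)=\mathrm{Vol}(S)=0$ by combining three ingredients: the vanishing of simplicial volume for Seifert pieces, the general upper bound of \cite{DKGromov}, and Theorem \ref{seifertintro} used to transfer a polynomial lower bound from a solid torus to $S$. Since a Seifert fibered space is a single piece in its geometric decomposition and carries a non-hyperbolic geometry, it contains no hyperbolic pieces, so $\mathrm{Vol}(S)=0$; this already yields $LTV(S)\leq \mathrm{Vol}(S)=0$ via the upper bound $LTV\leq \mathrm{Vol}$ established in \cite{DKGromov}, which holds for all odd $r$. Everything therefore reduces to the matching lower bound $LTV(S)\geq 0$, which I would obtain from a polynomial lower bound $TV_r(S)\geq r^{-K}/A$ valid for cofinitely many odd $r$. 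The only direct input I need is the base case: the solid torus $N=D^2\times S^1$ satisfies $c_1\leq TV_r(N)\leq c_2\,r^{d}$ for all odd $r$ with $c_1>0$, so $LTV(N)=0$ with a two-sided polynomial bound.

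First suppose $\partial S\neq\emptyset$. I would pick any regular fiber of $S$ and let $N$ be a fibered tubular neighborhood of it, a solid torus, with $T=\partial N$ a vertical torus. Then $P:=\overline{S\setminus N}$ is Seifert fibered over the base with one extra boundary circle, so $P$ has at least two toroidal boundary components ($T$ together with the nonempty $\partial S$), and $S=P\cup_T N$. Applying Theorem \ref{seifertintro} with Seifert piece $P$ and $M=N$ produces constants $A,K$ and a finite set of primes $I$ with $\frac{r^{-K}}{A}TV_r(N)\leq TV_r(S)\leq Ar^K TV_r(N)$ for all odd $r$ prime to $I$. Since $TV_r(N)$ is polynomially sandwiched, $\frac{2\pi}{r}\log TV_r(S)\to 0$ along the infinite set of odd $r$ prime to $I$, so the limit superior restricted to that set is $0$; as this restricted limit superior is at most the full $LTV(S)$, which is itself at most $\mathrm{Vol}(S)=0$, the squeeze forces $LTV(S)=0$. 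This disposes of the finitely many excluded congruence classes with no further computation.

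Now suppose $S$ is closed with an orientation-reversing involution $\tau$. After isotoping $\tau$ to respect the Seifert structure, I would use it to split $S$ along a $\tau$-invariant vertical torus $T$ whose two sides are interchanged, exhibiting $S=A\cup_T \bar A$ as the double of a Seifert fibered manifold $A$ with torus boundary. The TQFT gluing formula then expresses $TV_r(S)$ as $TV_r(A)^2$ up to factors polynomial in $r$ (equivalently, $\tau$ forces $RT_r(S)=\overline{RT_r(S)}$ to be real and equal to $\|RT_r(A)\|^2$). Applying the boundary case to $A$ yields a polynomial lower bound on $TV_r(A)$ for cofinitely many odd $r$, hence on $TV_r(S)$, giving $LTV(S)\geq 0$; with the upper bound this gives $LTV(S)=\mathrm{Vol}(S)=0$.

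I expect the main obstacle to be the closed case, specifically producing the torus-doubling from the hypothesis: one must arrange $\tau$ to be fiber-preserving and locate an invariant vertical torus that $\tau$ swaps, which requires analyzing orientation-reversing involutions of Seifert spaces and separately treating small base orbifolds (lens spaces and small Seifert spaces over $S^2$), where instead I would fall back on the explicit Gauss-sum computation of $TV_r$. The remaining work, namely the two-sided polynomial estimate for $TV_r(N)$ and the precise constants in the gluing formula, is routine, and the bookkeeping of the excluded primes is handled uniformly by the squeeze argument above.
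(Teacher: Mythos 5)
Your proposal is correct and follows essentially the same route as the paper: in the bounded case, both remove a fibered solid-torus neighborhood of a regular fiber and apply the Seifert gluing theorem with the solid torus (whose $TV_r$ the paper computes to be exactly $1$) as the base manifold, and in the closed case both realize $S$ as the double of a Seifert manifold with boundary and use the $TV_r$--$RT_r$ relation. The only differences are cosmetic: you run the squeeze through Theorem \ref{seifertintro} directly rather than through Theorem \ref{vc}, and you flag (reasonably) that the doubling structure in the closed case deserves justification, a point the paper simply asserts.
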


Theorem \ref{seifertintro} generalizes to  large families of 3-manifolds obtained by gluing together Seifert fibered 3-manifolds 
(Corollary \ref{graphglue}). As a result in Corollary \ref{Graph} we also verify Conjecture \ref{TVvolumeconj} for these manifolds.

We note that if $M$ satisfies Conjecture \ref{TVvolumeconj} with
``limsup" in the definition of $LTV(M)$ 
is actually a limit,  then Theorem \ref{seifertintro} implies that $M'$ also satisfies the conjecture.
For hyperbolic $M$, the Chen-Yang conjecture is stated for  $LTV(M)$  being the limit and in this form  it has been verified
 for large families 3-manifolds with cusps.
 We note however that the restriction to ``${\limsup}$"  for general  3-manifolds is necessary. Indeed for  Seifert fibered spaces 
the invariants $TV_r(M; q)$ can vanish for infinitely many integers $r$. See, for example, \cite{Shashini}.

 The hyperbolic links in $S^3$ for which the volume conjecture has been verified include the figure-eight knot, the Borromean rings \cite{colJvolDKY}, the twist knots \cite{CZ}, the Whitehead chains \cite{WongWhitehead},
and large families of octahedral links in $S^3$ including the octahedral augmented links \cite{Ku, WongYangrelRT}. The conjecture has also been verified for fundamental shadow links \cite{growth6j}
which form a class of hyperbolic links in connected sums of $S^1\times S^2$ that gives all orientable 3-manifolds that are either closed or with toroidal boundary  by Dehn filling, and
for additional families of hyperbolic links in  connected sums of $S^1\times S^2$ \cite{Bel}. For infinite families of non-hyperbolic links where Conjecture \ref{TVvolumeconj}
holds when $LTV(M)$ the limit, see \cite{KuM1}.
Theorem \ref{seifertintro}  can be applied to any of these families of links  to produce new families of satellite links satisfying Conjecture \ref{TVvolumeconj}.

As an example we state the following:

\begin{cor}\label{f8}If $L$ is a link obtained as an  iterated  satellite of the figure-eight with patterns  torus links, then
$$\textit {LTV}(S^3\setminus L) =\mathrm{Vol}(S^3\setminus L) \approx 2.0298832.$$
\end{cor}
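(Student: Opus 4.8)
The plan is to combine the geometric decomposition of the satellite exterior with an iterated application of Theorem \ref{seifertintro}. First I would set up the topology. Let $M_0$ denote the figure-eight knot exterior, which is hyperbolic with $\mathrm{Vol}(M_0)=2v_3\approx 2.0298832$. If $L$ is an iterated satellite of the figure-eight with torus-link patterns, then $S^3\setminus L$ decomposes along a family of essential tori into the single hyperbolic piece $M_0$ together with a collection of Seifert fibered pieces. Indeed, each satellite stage inserts a pattern torus link $P$ inside a solid torus $V$, and the exterior of a torus link in a solid torus is a Seifert fibered space; its boundary consists of $\partial V$ together with one torus for each component of $P$, so it has at least two boundary components, which is exactly the hypothesis of Theorem \ref{seifertintro}.

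Next I would assemble the exterior one piece at a time. Starting from $M_0$, I glue on the Seifert pieces successively, producing manifolds $M_0,M_1,\dots,M_k=S^3\setminus L$, where each $M_{j+1}$ is obtained from $M_j$ by gluing a Seifert fibered piece with at least two boundary components along a boundary torus. Applying Theorem \ref{seifertintro} at each stage and composing the resulting inequalities yields constants $A,K>0$ and a finite set of integers $I$ such that
$$\frac{r^{-K}}{A}\,TV_r(M_0)\leqslant TV_r(S^3\setminus L)\leqslant Ar^{K}\,TV_r(M_0)$$
for all odd $r$ not divisible by any $p\in I$, with the upper inequality valid for all odd $r\geqslant 3$.

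Then I would pass to growth rates. After applying $\frac{2\pi}{r}\log|\cdot|$, the polynomial corrections $\frac{2\pi}{r}\log(Ar^{\pm K})$ tend to $0$ and are therefore invisible to $LTV$. Because the volume conjecture is known for the figure-eight knot in the strong form in which $LTV(M_0)$ is a genuine limit equal to $\mathrm{Vol}(M_0)=2v_3$, restricting to the cofinal set of odd $r$ avoiding the primes in $I$ preserves this limit; together with the upper bound, which holds for all odd $r$, this forces $LTV(S^3\setminus L)=2v_3\approx 2.0298832$. Finally, since simplicial volume vanishes on Seifert fibered spaces and is additive under gluing along essential tori, $\mathrm{Vol}(S^3\setminus L)=\mathrm{Vol}(M_0)=2v_3$, which completes the identification.

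The step I expect to be the main obstacle is the careful handling of the limsup-versus-limit distinction in the presence of the excluded set $I$. Since the lower inequality of Theorem \ref{seifertintro} holds only for $r$ avoiding the primes in $I$, transferring it to the full $\limsup$ defining $LTV(S^3\setminus L)$ genuinely relies on the fact that the figure-eight satisfies the conjecture with an honest limit rather than merely a limsup. A secondary point requiring care is confirming that each pattern insertion really produces a Seifert piece with at least two boundary components along an essential gluing torus, so that Theorem \ref{seifertintro} is applicable at every stage of the iteration.
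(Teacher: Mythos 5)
Your proposal is correct and follows essentially the same route as the paper: the paper likewise observes that each torus-link satellite operation amounts to gluing a Seifert fibered piece with at least two boundary components onto the boundary torus, and then invokes Theorem \ref{seifertintro} iteratively together with the fact that the figure-eight complement satisfies the conjecture with an honest limit. Your write-up simply makes explicit the intermediate manifolds $M_0,\dots,M_k$, the accumulation of the excluded sets $I$, and the volume additivity, all of which the paper leaves implicit.
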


Forming a satellite of a knot $K$ with pattern a torus link amounts to gluing  a Seifert fibered 3-manifold with at least two boundary components to the boundary torus of the knot complement
\cite{MB}. Thus, Corollary \ref{f8} follows from Theorem \ref{seifertintro} and above discussion.

\smallskip

The proofs of many results in the area rely at least partially on analytic estimates and direct analysis  of the asymptotics of the Reshetikhin-Turaev and Turaev -Viro invariants. See, for example,
 \cite{ growth6j, Bel, CZ, KuM1,  WongYangrelRT, Shashini} and references therein. In contrast our proofs in this paper rely heavily on TQFT properties and 3-manifold topology. In the process, we discuss
an approach that could potentially lead to new progress towards understanding the behavior of the asymptotics of the Turaev-Viro invariants under hyperbolic Dehn filling. For details,
the reader is referred to Section six.

\smallskip
\smallskip

{\bf {Acknowledgement.}} {The research of E.K. and S.M. is partially supported by NSF grant  DMS-2304033.  The research of R.D. is partially
	supported by the ANR project ``NAQI-34T” (ANR-23-ERCS-0008) and by the
	project ``CLICQ” of the R\'egion Bourgogne-Franche Comt\'e}.

\medskip

\section{TQFT and Turaev-Viro invariants}
\label{sec:prelim}
In this section we recall how to obtain the Turaev-Viro invariants
from the Reshetikhin-Turaev $\mathrm{SO}_3$-TQFT of \cite{ReTu}.  We begin by summarizing some basic features of the $\mathrm{SO}_3$-TQFT
following skein-theoretic framework of   \cite{BHMV1, BHMV2}.

\subsection{Preliminaries} For an odd integer $r\geqslant 3$ and a primitive $2r$-th root of unity $q$,  the $\mathrm{SO}_3$-TQFT  functor, denoted by  $RT_r$, 
associates a finite dimensional  Hermitian $\mathbb{C}$-vector space $RT_r(\Sigma)$, to any closed  oriented surface $\Sigma$, such that:

\begin{enumerate} [(a)]
\item For a disjoint union $\Sigma \coprod \Sigma'$ one has $RT_r(\Sigma \coprod \Sigma')=RT_r(\Sigma) \otimes RT_r(\Sigma').$
\item For a closed oriented  $3$-manifold $M$, the value $RT_r(M)\in \mathbb{C}$ is the $\mathrm{SO}_3$-Reshetikhin-Turaev invariant and if  $\partial M\neq \emptyset$, $RT_r(M)$ is a vector in $RT_r(\partial M)$.
\item If  $(M,\Sigma, \Sigma')$ is a a cobordism from a surface $\Sigma$ to a surface  $\Sigma'$, then
$$RT_r(M):\ RT_r(\Sigma) \rightarrow RT_r(\Sigma'),$$ is a linear map such that compositions of cobordisms are sent to compositions of linear maps (up to powers of $q$).
\item The 3-manifold invariants $RT_r$ are multiplicative under disjoint union and for connected sums we have
 $$RT_r(M\# M')=\eta_r^{-1}RT_r(M)RT_r(M'),$$ 
 where $\eta_r={\displaystyle{ \frac{2\sin(\frac{2\pi}{r})}{\sqrt{r}}}}$. Furthermore we have $RT_r(S^2\times S^1)=1$.

\end{enumerate}
\begin{remark} \label{powers}
In this paper we will be concerned with the question of whether  the maps $RT_r(M)$ are invertible, and in the case we have inverses, we will study the  $r$-growth rate of the operator norm of the inverses.
Since these properties are not affected by multiplication by a power of $q$ in the sequel we will assume that compositions of cobordisms are sent to compositions of linear maps $RT_r$.
\end{remark}

The spaces  $RT_r(\Sigma)$ are certain quotients of  Kauffman  bracket skein modules (at $q$) of a handlebody bounded by $\Sigma$. 
In this paper we are interested in the case where $\Sigma$ is a the 2-torus $T^2$. In this case,   \cite{BHMV2}, views $T^2$  as the boundary
 of a solid torus $D^2\times S^1$ and obtains elements $e_i \in RT_r(T^2)$ by taking the core  $\lbrace 0 \rbrace \times S^1$ of the solid torus decorated with the $i-1$
 Jones-Wenzl idempotent. For $r=2m+1$ and $q$ a $2r$-th root of unity, this process gives a family  $e_1,\ldots ,e_{2m-1}$ of elements in  $RT_r(T^2)$
  \cite[Lemma 3.2]{BHMV2}. We have the following:

\begin{thm}\label{thm:basis}\cite[Theorem 4.10]{BHMV2} 
For $r=2m+1\geqslant 3$,
the Hermitian pairing of $RT_r(T^2)$ is positive definite and
 the family $e_1,e_2,\ldots e_{m}$ is an orthonormal basis.
Moreover, for $0\leqslant i \leqslant m-1$, we have $e_{m-i}=e_{m+1+i}$.
\end{thm}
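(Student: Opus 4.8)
The plan is to compute the Gram matrix of the Hermitian pairing directly from the gluing axioms of the $\mathrm{SO}_3$-TQFT in the $e_i$ basis, and to identify the folding relation with the reflection symmetry of the Jones--Wenzl idempotents at a $2r$-th root of unity.

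First I would recall how the pairing on $RT_r(T^2)$ arises. Viewing $T^2$ as the boundary of the solid torus $H = D^2 \times S^1$, the space $RT_r(T^2)$ is a quotient of the skein module of $H$, and the Hermitian form is $\langle x, y\rangle = RT_r\bigl(H \cup_{\mathrm{id}} \bar H\bigr)(x \sqcup \bar y)$, where $H$ is glued to its orientation reversal by the identity of $T^2$. Since $D^2 \times S^1 \cup_{\mathrm{id}} D^2 \times S^1 = S^2 \times S^1$, it follows that $\langle e_i, e_j\rangle$ equals the Reshetikhin--Turaev invariant of the two-component link in $S^2 \times S^1$ consisting of two parallel copies of $\{pt\}\times S^1$, colored by the Jones--Wenzl idempotents $f_{i-1}$ and $f_{j-1}$ respectively.

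Next I would evaluate this invariant. Cutting $S^2 \times S^1$ along a $2$-sphere meeting each core once presents the invariant as a graded trace, so the colored link evaluates to $\dim \mathrm{Hom}(f_{i-1}, f_{j-1})$ computed in the relevant truncated skein category; equivalently it is the multiplicity of the trivial color in the fusion product of $f_{i-1}$ and $f_{j-1}$. Because distinct Jones--Wenzl idempotents are mutually orthogonal simple self-dual objects, this multiplicity is $\delta_{ij}$ on the range $1 \le i, j \le m$, which simultaneously yields orthonormality of $e_1, \ldots, e_m$ and positive definiteness of the form on their span. I would take care to track the normalizing powers of $\eta_r$ so that the diagonal entries come out to be exactly $1$.

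Finally, for the folding relation I would use that at a primitive $2r$-th root of unity the idempotent $f_{r-1}$ is negligible and $f_{r-2}$ is invertible, so tensoring with $f_{r-2}$ induces the reflection $f_n \mapsto f_{r-2-n}$ on colors; translating to the $e_i$ indexing gives $e_{m-i} = e_{m+1+i}$ for $0 \le i \le m-1$. Together with $\dim RT_r(T^2) = m$ coming from \cite[Lemma 3.2]{BHMV2}, this shows that $e_1, \ldots, e_m$ is an orthonormal basis. I expect the main obstacle to be the second step: making the $S^2 \times S^1$ evaluation rigorous inside the skein-theoretic model---in particular, justifying the graded-trace identity and the truncation of the fusion rules at the root of unity with the correct $\eta_r$ normalization---since this is where the special arithmetic of $r = 2m+1$ enters and where the naive fusion count must be reconciled with the vanishing of the higher idempotents.
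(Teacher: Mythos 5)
The paper does not prove this statement; it is quoted verbatim from [BHMV2, Theorem 4.10], so your attempt can only be measured against the standard skein-theoretic argument. Your overall architecture (pairing $=$ invariant of $S^2\times S^1$ with two colored cores, evaluated as a trace) is the right one, but there is a genuine gap at exactly the point where the arithmetic of $r$ odd and $q$ a primitive $2r$-th root of unity enters, and it makes your steps mutually inconsistent as written. If, as you assert, distinct Jones--Wenzl idempotents were mutually orthogonal simple objects in ``the relevant truncated skein category'', the same Gram computation would give $\langle e_i,e_j\rangle=\delta_{ij}$ for all $1\le i,j\le r-1$, since $f_0,\dots,f_{r-2}$ all have nonzero quantum dimension here ($[n]\ne 0$ whenever $r\nmid n$). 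That conclusion contradicts the folding relation you prove two sentences later: $e_{m-i}=e_{m+1+i}$ forces $\langle e_{m-i},e_{m+1+i}\rangle=1$, not $0$. The resolution is that the odd-$r$ theory is the \emph{reduced} ($\mathrm{SO}_3$) theory, in which $f_c$ and $f_{r-2-c}$ become isomorphic; the Hom-count must be carried out there, where it yields $\delta_{ij}+\delta_{i+j,\,2m+1}$, and this happens to reduce to $\delta_{ij}$ on the range $1\le i,j\le m$. Your proposal never identifies this reduction, and restricting the index range by fiat does not explain why the naive count fails outside it.

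The folding step has the same missing ingredient. Invertibility of $f_{r-2}$ (i.e.\ $f_{r-2}\otimes f_n\cong f_{r-2-n}$ modulo negligibles) is not sufficient: the same object is invertible in the unreduced $SU(2)$ theory at a primitive $4r$-th root of unity, where no such identification of vectors in the TQFT space of the torus occurs and the dimension is $r-1$ rather than $m$. What you actually need is that $f_{r-2}$ is \emph{transparent} in the odd theory --- equivalently $S_{r-2,b}=S_{0,b}$ for all colors $b$, a short computation using $q^{2r}=1$ and $r$ odd --- so that a core colored $f_{r-2}$ pairs identically to the empty skein against everything, whence $e_{m-i}-e_{m+1+i}$ lies in the kernel of the form. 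Note that the case $i=m-1$ of the folding relation says precisely that the $f_{r-2}$-colored core equals the empty link in $RT_r(T^2)$, so invoking ``tensoring with $f_{r-2}$'' without first establishing transparency comes close to assuming what is to be proved. Finally, be careful that importing $\dim RT_r(T^2)=m$ as an input risks circularity: in BHMV the dimension count is itself a consequence of orthogonality and folding together with the spanning statement, so you should only import spanning and deduce the dimension from your orthonormality.
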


The Turaev-Viro invariants of compact oriented $3$-manifolds originally were defined as state sums over triangulations of manifolds (see \cite{TuraevViro}). In this paper however,  we will only use the relation between the Turaev-Viro invariants and Reshetikhin-Turaev invariants. This relation  was first proved by Roberts \cite{Roberts} in the case of closed $3$-manifolds, and extended to manifolds with boundary by Benedetti and Petronio  \cite{BePe}. We state it only in the case of manifolds with toroidal boundary, which is what we need.

\begin{thm}\label{thm:TV-RT}
Let $M$ be a compact oriented manifold with toroidal boundary, let $r\geqslant 3$ be an odd integer and let $q$ be a primitive $2r$-th root of unity. Then,
$$TV_r(M,q^2)=||RT_r(M,q)||^2$$
where $|| \cdot ||$ is the natural Hermitian norm on $RT_r(\partial M).$ 
\end{thm}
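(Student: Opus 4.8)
The plan is to reduce the state-sum definition of $TV_r(M)$ to a skein-theoretic quantity via Roberts' chain-mail technique, and then to recognize that quantity as a Reshetikhin-Turaev gluing. First I would pass from a triangulation of $M$ to the dual handle decomposition (equivalently, a special spine), and recall that the Turaev-Viro state sum can be rewritten, up to the normalization factors $\eta_r$ contributed by the handles, as the Kauffman bracket evaluation of the \emph{chain-mail link} $CH(M)$: the link obtained by encircling the $1$-handles and $2$-handles of the decomposition with parallel copies of the Kirby color $\omega$. For closed $M$ this is exactly Roberts' computation; for $M$ with toroidal boundary one performs the construction relative to a fixed triangulation of $\partial M$, leaving the boundary untouched, so that $CH(M)$ produces a well-defined element of the skein module of $\partial M$.

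The second step is to identify this chain-mail presentation with the double $D(M) := M \cup_{\partial M} \bar M$, where $\bar M$ is $M$ with reversed orientation. The circles around the $1$- and $2$-handles assemble into a surgery description in which $M$ and its mirror image are glued along the Heegaard-type surface carrying the handle decomposition; applying the Reshetikhin-Turaev surgery formula converts the skein evaluation of $CH(M)$ into $RT_r$ of the glued manifold. In the closed case this yields $TV_r(M) = RT_r(D(M))$ directly; in the boundary case the two copies of the fixed boundary triangulation remain as free indices, so $CH(M)$ computes the image of $RT_r(M)$ under the boundary pairing rather than a single scalar.

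The final step invokes the gluing axioms (c) and (d) together with the behavior of $RT_r$ under orientation reversal. Because replacing $M$ by $\bar M$ corresponds to replacing $q$ by $\bar q$, the invariant of $\bar M$ is the complex conjugate of that of $M$, so gluing $M$ to $\bar M$ along $\partial M$ realizes the \emph{Hermitian} (rather than bilinear) pairing on $RT_r(\partial M)$. This is precisely the pairing that is positive definite with orthonormal basis $\{e_i\}$ in Theorem \ref{thm:basis}. Hence $RT_r(D(M)) = \langle RT_r(M),\, RT_r(M)\rangle = ||RT_r(M)||^2$, which combined with the previous step gives the claimed identity $TV_r(M,q^2) = ||RT_r(M,q)||^2$.

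The main obstacle is the bookkeeping of normalization and anomaly factors. The Reshetikhin-Turaev invariants carry a framing/signature anomaly (a power of $q$ coming from the central charge) as well as the per-handle factors of $\eta_r$ appearing in both the chain-mail formula and the surgery formula. One must verify that these all cancel upon passing to the norm: the mirror copy $\bar M$ contributes the conjugate anomaly phase, and the $\eta_r$ factors are arranged by the handle structure to match exactly. In the boundary case there is the additional subtlety of checking that the element of the skein module of $\partial M$ produced by the chain-mail construction coincides, under the identification of \cite{BHMV2}, with the vector $RT_r(M) \in RT_r(\partial M)$ in the normalization fixing the $e_i$. Reconciling these conventions, rather than any deep geometric input, is where the real work lies.
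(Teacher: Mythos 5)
The paper offers no proof of this statement: it is quoted from Roberts (closed case) and Benedetti--Petronio (manifolds with boundary), and your outline --- chain-mail link from the dual handle decomposition, identification with the double $M\cup_{\partial M}\bar{M}$ via the surgery formula, and the observation that orientation reversal conjugates $q$ so that the gluing realizes the Hermitian rather than the bilinear pairing --- is precisely the argument of those references. Your sketch is correct and follows essentially the same route the paper relies on, with the normalization bookkeeping rightly flagged as the only real labor.
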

Since for  $T^2$ a torus, the natural Hermitian form on $RT_r(T^2)$ is definite positive for any $q$, the invariants $TV_r(M)$ are non-negative.

\begin{remark} \label{normstuff} Given a finite dimensional  Hermitian $\mathbb{C}$-vector space $V$, with a positive Hermitian pairing
$\langle . , . \rangle : V \times V \longrightarrow \mathbb{C}$, as above, we use $||.||$ to denote the norm induced by the Hermitian pairing (i.e. $||x||^2:= \langle x, x\rangle $).
Given a linear map $A:  V \longrightarrow V$, we will use $|||A|||$ to denote the norm of the operator, that is
$$|||A|||:= \underset{||x||=1}{\textrm{max}} ||A(x)||.$$
where $x\in RT_r(T')$. We also define
$$ n(A):=\underset{||x||=1}{\textrm{min}} ||A(x)||.$$
If we assume that $A$ is invertible, then for any $x\in V,$ one has $$||x||=||A^{-1}(Ax)||\leq |||A^{-1}|||\cdot  ||Ax||,$$ with equality for some choice of $x.$ Inverting the inequality, one gets:  
$$ n(A)=|||A^{-1}|||^{-1}.$$
 
Finally, if $A$ is Hermitian, that is, $ \langle Ax,y \rangle=\langle x,Ay \rangle $ for any $x,y\in V$, then $A$ has an orthonormal basis of diagonalization, and 
$$|||A|||=\underset{\lambda \in \mathrm{Spec}(A)}{\max}(|\lambda|)\ \ \  {\rm and} \ \ \ n(A)=\underset{\lambda \in \mathrm{Spec}(A)}{\min}(|\lambda|).$$
\end{remark}

\subsection{Genus one mutation} Consider $T^2\simeq \R^2/\Z^2$ as the quotient of $\R^2$ where $\pi_1(T^2)\simeq \Z^2$ acts by covering translations.
The elliptic involution $\iota$  on $T^2$ is defined by
$$\begin{array}{rcccl}
\iota & : & T^2\simeq \R^2/\Z^2 & \longrightarrow & T^2 \simeq \R^2/\Z^2
\\ & & (x,y) & \longrightarrow & (-x,-y)	.
\end{array},$$
and its isotopy class defines an element in the mapping class group of the torus $ \Gamma(T^2)$.

Given an element $\varphi$ in $ \Gamma(T^2)$ 
consider
$$M_{\varphi}:=[0,1]\times T^2 \underset{(x,1)\sim \varphi(x)}{\cup} T^2,$$
 the mapping cylinder of $\varphi$. Now  $RT_r(M_{\varphi})$ is a vector in $RT_r(T)^2\otimes \overline {RT_r(T^2)}$. 
The latter space can be identified with $\mathrm{End}(RT_r(T^2))$ as $RT_r(T^2)\simeq RT_r(T^2)^*$ by the natural Hermitian form.  
The assignment  $\rho_r(\varphi)=RT_r(M_{\varphi})$, defines a projective representation
\begin{equation}\label{repres}
 \rho_r :   \Gamma(T^2) \longrightarrow \mathrm{End}(RT_r(T^2)). 
 \end{equation}

\begin{defin}
{\rm{ A compact oriented $3$-manifold $M'$ is said to be obtained from another compact oriented $3$-manifold $M$ by genus one mutation,  if $M'$ is obtained from $M$ by cutting along an embedded torus in $M$ and regluing using the elliptic involution of $T^2$. }}
\end{defin}

We will make use of the following fact, which was proved by \cite{Rong94} for the $3$-manifold invariants, and which we state for cobordism invariants:

\begin{lem}
	\label{lemma:gen1mutation} If $C$ and $C'$ are two cobordisms, and $C'$ is obtained from $C$ by genus one mutation, then $RT_r(C)=RT_r(C')$ for any odd integer $r\geq 3$.
\end{lem}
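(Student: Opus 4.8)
The plan is to reduce the statement to the single fact that the elliptic involution acts as the identity on $RT_r(T^2)$, that is, that $\rho_r(\iota)=\mathrm{Id}$ (up to the projective phase we ignore by Remark \ref{powers}), and then to feed this into the gluing/functoriality axiom of the TQFT. The key point is that genus one mutation is a purely local modification, supported in a collar of the mutating torus, so the argument should be insensitive to whether $C$ is closed or carries further boundary. This locality is exactly what lets one upgrade Rong's closed-manifold statement to cobordisms.

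First I would set up the cut-and-reglue picture. Let $T\subset \mathrm{int}(C)$ be the torus along which the mutation is performed, and let $C_0$ be the cobordism obtained by cutting $C$ open along $T$. Then $C_0$ acquires two extra toral boundary components $T_+$ and $T_-$, carrying opposite orientations, and the vector $RT_r(C_0)$ lives in $RT_r(\partial C)\otimes RT_r(T_+)\otimes RT_r(T_-)$, where $RT_r(T_-)\cong \overline{RT_r(T_+)}$. Re-gluing $T_+$ to $T_-$ recovers $C$, and by functoriality $RT_r(C)$ is the image of $RT_r(C_0)$ under the contraction of the last two factors by the Hermitian pairing. The mutated manifold $C'$ is obtained by the same gluing precomposed with $\iota$, so $RT_r(C')$ is obtained from the same vector $RT_r(C_0)$ by inserting $\rho_r(\iota)$ on the $T_+$ factor before contracting. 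Consequently $RT_r(C')=RT_r(C)$ as soon as $\rho_r(\iota)$ is the identity.

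It remains to show $\rho_r(\iota)=\mathrm{Id}$ on $RT_r(T^2)$. Here I would use the explicit basis $e_1,\dots,e_m$ of Theorem \ref{thm:basis}, each $e_i$ being represented by the core $\{0\}\times S^1$ of the solid torus $D^2\times S^1$ colored by the $(i-1)$-st Jones-Wenzl idempotent. The involution $\iota(x,y)=(-x,-y)$ is orientation preserving on $T^2$ and sends the meridian to its reverse, so it extends to an orientation preserving self-diffeomorphism of $D^2\times S^1$, for instance $(z,w)\mapsto(\bar z,\bar w)$, which carries the core to itself while reversing its orientation. Since colored curves in the Kauffman bracket skein module are unoriented, this fixes each $e_i$, giving $\rho_r(\iota)(e_i)=e_i$ and hence $\rho_r(\iota)=\mathrm{Id}$.

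The main obstacle is the careful bookkeeping in the gluing step: one must track the orientations and the identification $RT_r(T_-)\cong\overline{RT_r(T_+)}$ so that the discrepancy between $C$ and $C'$ is exactly the insertion of $\rho_r(\iota)$ and nothing more, and one must confirm that the framing (projective) anomaly of $\rho_r(\iota)$ genuinely drops out, as guaranteed by Remark \ref{powers}. As a consistency check, specializing to closed $C$ recovers Rong's theorem, which (on any closed example with nonvanishing invariant) also pins down $\rho_r(\iota)=\mathrm{Id}$ independently of the skein computation.
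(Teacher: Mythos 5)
Your proposal is correct and follows essentially the same route as the paper: cut along the mutating torus, observe that $C$ and $C'$ differ only by inserting the mapping cylinder of $\iota$ before contracting, and conclude from the triviality of $\rho_r(\iota)$ on $RT_r(T^2)$. The only difference is that where the paper cites \cite{FWW02} for the fact that the elliptic involution lies in the kernel of $\rho_r$, you supply a short direct skein-theoretic verification (extending $\iota$ to the solid torus and using that the colored core is unoriented), which is a valid substitute.
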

\begin{proof}
	Let $T\simeq T^2$ be a torus embedded in $C$ such that $C'$ is obtained by cutting $C$ along $T$ and regluing using the elliptic involution. Equivalently, considering a regular neighborhood $N$ of $T$ in $C$, one can say that $C'$ is obtained from $C$ by replacing a trivial cylinder $N\simeq T\times [0,1]$ by the mapping cylinder 
$M_{\iota}$ of the elliptic involution $\iota$ on $T$. 
	Since the elliptic involution is in the kernel of the representation  $\rho_r$  (see for example \cite{FWW02}), the mapping cylinder of the elliptic involution and the trivial cylinder have the same image by $RT_r$. Moreover, $C\setminus N$ and $C'\setminus M_{\iota}$ are equivalent cobordisms. Since $RT_r$ is a TQFT, the maps $RT_r(C)$ and $RT_r(C')$, are obtained from $RT_r(C\setminus N)$ and $RT_r(T^2\times [0,1])$ (resp. $RT_r(C\setminus N)$  and $RT_r(M_{\iota})$)  by tensor contraction, and therefore are the same map. 
\end{proof}

\section{TQFT maps of Seifert fibered spaces}

Let $S=S(B; \ \frac{q_1}{p_1} \ldots  \frac{q_n}{p_n})$ denote the orientable   Seifert fibered $3$-manifold with fiber space $2$-orbifold $B$  and  fiber invariants $(q_1, p_1)\ldots (q_n, p_n)$ in the notation of  \cite{JN83}. Recall that , for $i=1, \cdots n$, $q_i$ is co-prime to $p_i$ and that the integers $p_1, \cdots, p_n$ are called the multiplicities of the exceptional fibers. 
In particular, if $p_i=1$, for all $1, \cdots, n$, then  $S$ is an $S^1$-bundle over the surface $B$. If the surface $B$ has boundary then $S$ also has boundary which is union of tori.
If $\partial S$ has two components,  say $T$ and $T'$, then as discussed in Section \ref{sec:prelim}, the  Reshetikhin-Turaev $\mathrm{SO}_3$-TQFT 
gives a linear map
$$RT_r(S):\ RT_r(T) \rightarrow RT_r(T'),$$
for any odd integer $r\geqslant 3$ and a primitive $2r$-th root of unity $q$. 
As earlier we use  $|| \cdot ||$ to denote  is the norm induced by the Hermitian form on $RT_r(\partial S)$ and we use $||| \cdot |||$ to denote the operator norm of linear maps  between TQFT
spaces.  That is,  in the case that $RT_r(S)$ is invertible, we will have
$$||| RT_r(S)^{-1}|||:= \underset{||x||=1}{\textrm{max}} ||RT_r(S)^{-1}(x)||,$$
where $x\in RT_r(T')$.

Our main result in this section is the following:

\begin{thm} \label{invertible} For  $S=S(B; \ \frac{q_1}{p_1} \ldots  \frac{q_n}{p_n})$   a Seifert fibered 3-manifold, with $\partial S= T\cup T'$,
 the linear map $RT_r(S):\ RT_r(T) \rightarrow RT_r(T')$, is invertible for all odd $r$ coprime to  $p_1, \cdots, p_n$.
Furthermore, there are constants $C$ and $N>0$ such that
 $$||| RT_r(S)^{-1}||| \leqslant  C R^N.$$
\end{thm}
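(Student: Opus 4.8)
The plan is to show that, in the orthonormal basis $e_1,\dots,e_m$ of Theorem \ref{thm:basis}, the cobordism map $RT_r(S)$ is \emph{diagonal}, and then to bound its eigenvalues away from zero by a fixed power of $r^{-1}$. Recall that the colored-core vectors $e_i$ are, up to normalization, the primitive idempotents of the commutative Frobenius algebra that the pair-of-pants${}\times S^1$ cobordism puts on $RT_r(T^2)$: two fibre-parallel cores of colors $i$ and $j$ fuse to $\delta_{ij}e_i$. The first step is to decompose $S$ along vertical tori. Since the base orbifold $B$ has non-empty boundary, the underlying $S^1$-bundle is trivial, so $S$ is obtained from a product $F_0\times S^1$ (with $F_0$ the underlying surface of $B$, with small disks removed around the cone points and with two distinguished boundary circles mapping to $T$ and $T'$) by gluing in, for each $i$, a fibered solid torus $V_i$ realizing the exceptional fibre $(q_i,p_i)$, together with vertical Dehn twists accounting for the integral framings. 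Because $RT_r(T^2)$ is commutative and semisimple, every such piece acts diagonally in $\{e_i\}$; concretely $RT_r(S)$ is multiplication by the element obtained from $F_0\times S^1$ after contracting the $V_i$-boundaries against the vectors $RT_r(V_i)\in RT_r(T^2)$, and this multiplication sends $e_i\mapsto \lambda_i\, e_i$. As $\{e_i\}$ is orthonormal, $RT_r(S)$ is normal and by Remark \ref{normstuff} one has $|||RT_r(S)^{-1}|||=1/\min_i|\lambda_i|$, so everything reduces to understanding the $\lambda_i$.

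Second, I would record the factorization of the eigenvalues. Collecting the contributions of each piece, $\lambda_i$ is a product of three kinds of factors: a power $d_i^{\chi}$ of the quantum dimension $d_i=\sin(i\pi/r)/\sin(\pi/r)$, where $\chi$ records the Euler characteristic of the base together with the number of exceptional fibres; a twist factor coming from the vertical Dehn twists and the Euler number, which is a root of unity hence of modulus $1$; and, for each exceptional fibre, a factor $G_{(q_i,p_i)}(i)$ equal, up to the normalization of the idempotents, to the matrix coefficient $\big(\rho_r(g_i)\big)_{i,1}$, where $g_i\in SL_2(\Z)$ is the fixed gluing matrix with bottom row determined by $(q_i,p_i)$ and $\rho_r$ is the projective representation of \eqref{repres}. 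Since for $1\le i\le m$ the angle $i\pi/r$ lies in $(0,\pi/2]$ we have $1\le d_i\le 1/\sin(\pi/r)=O(r)$, so $|d_i^{\chi}|$ is trapped between $C^{-1}r^{-|\chi|}$ and $Cr^{|\chi|}$, and the twist factors are harmless. Thus invertibility and the desired bound both reduce to showing that the factors $G_{(q_i,p_i)}(i)$ are nonzero and polynomially bounded below, uniformly in $i$, precisely when $\gcd(r,p_i)=1$.

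This last point is the heart of the matter and the main obstacle. Each $g_i$ is a \emph{fixed} matrix, independent of $r$, so its continued-fraction word in the generators $S$ and $T$ has a length that does not grow with $r$; hence $\big(\rho_r(g_i)\big)_{i,1}$ is a sum, of bounded length, of products of entries of the modular $S$- and $T$-matrices, i.e.\ a generalized quadratic Gauss sum. The danger is that such a sum, viewed naively, is an algebraic number in $\Q(e^{\pi i/r})$ whose terms could conspire to be exponentially small: the crude lower bound coming from the product of Galois conjugates is only exponential and does not suffice. To obtain a polynomial bound I would instead evaluate the Gauss sum in closed form by quadratic reciprocity for the finite Weil representation; for $g=\left(\begin{smallmatrix}a&b\\ c&d\end{smallmatrix}\right)$ with $c\neq 0$ this rewrites $\big(\rho_r(g)\big)_{i,1}$ as $\varepsilon\,r^{-1/2}$ times a Gauss sum \emph{modulo the fixed integer} $p_i$ times a single sine ratio linear in $i$. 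The modulus of the mod-$p_i$ Gauss sum is the constant $\sqrt{p_i}$, and it is nonzero exactly when $\gcd(r,p_i)=1$ (this is where the coprimality hypothesis enters, forcing the relevant quadratic character to be non-degenerate), while the residual sine ratio is a quantum-integer-type quantity which is nonzero, and bounded below by a fixed negative power of $r$, precisely because $\gcd(r,p_i)=1$ keeps its argument off $\pi\Z$. Consequently $|G_{(q_i,p_i)}(i)|\ge C^{-1}r^{-N'}$ uniformly in $i$.

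Finally, multiplying the three estimates yields $\min_i|\lambda_i|\ge C^{-1}r^{-N}$ for constants $C,N>0$ depending only on $S$, which by Remark \ref{normstuff} is exactly the asserted bound $|||RT_r(S)^{-1}|||\le C\,r^{N}$, while nonvanishing of every factor gives invertibility. The only genuinely delicate step is the closed-form evaluation and lower bound of the exceptional-fibre Gauss sums; the genus of the base and the Euler number contribute only further, harmless, polynomial powers of $d_i$ and unimodular twists.
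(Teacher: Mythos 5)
Your strategy --- diagonalize $RT_r(S)$ in one stroke and bound its eigenvalues from below by a fixed power of $r^{-1}$ --- is genuinely different from the paper's. The paper never computes the eigenvalues of the full operator: it cuts $B$ along separating curves into annuli with one cone point, two-holed tori, and one-holed M\"obius bands, writes $RT_r(S)$ as a composition of the corresponding elementary cobordism maps, handles the annulus pieces by citing the Kumar--Melby computation for cable spaces, the $S^1\times\Sigma_{1,2}$ piece by the explicit eigenvalues of the operator $K$ from \cite{BHMV2}, and the non-orientable pieces by a genus-one mutation trick, and then concludes by sub-multiplicativity of $|||\cdot|||$. Your route is more computational and would, if completed, give finer information (the actual spectrum of $RT_r(S)$); but as written it has real gaps.

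First, your opening reduction is false for non-orientable base orbifolds: if $B$ contains a M\"obius band, the orientable circle bundle over $B$ is \emph{not} trivial even though $\partial B\neq\emptyset$, so $S$ is not obtained from $F_0\times S^1$ by gluing in fibered solid tori. The theorem must cover this case (it is exactly why the paper proves parts (b) and (c) of Lemma \ref{Klein}); in your framework each crosscap contributes multiplication by a central ``crosscap element'' whose eigenvalues you would also have to bound below, and you do not address this. Second, the fusion statement you use to get diagonality is incorrect: two fiber-parallel cores of colors $i$ and $j$ fuse according to the fusion rules, $e_i\cdot e_j=\sum_k N_{ij}^k e_k$, not as $\delta_{ij}e_i$; the primitive idempotents are the images of the $e_i$ under the (unitary) $S$-matrix. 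The conclusion you want --- that $RT_r(S)$ is normal and $|||RT_r(S)^{-1}|||=1/\min_j|\lambda_j|$ --- survives, since the idempotent basis is still orthonormal, but the argument as stated is wrong. Third, and most importantly, the step you yourself flag as ``the heart of the matter,'' namely the closed-form evaluation of the exceptional-fiber Gauss sums and the uniform polynomial lower bound $|G_{(q_i,p_i)}(j)|\geq C^{-1}r^{-N'}$ for $\gcd(r,p_i)=1$, is asserted rather than carried out. This is precisely the content of the cable-space computation of Kumar--Melby that the paper invokes in Lemma \ref{cable}; without either citing that result or actually performing the reciprocity argument and checking non-degeneracy of the relevant quadratic character, your proof is incomplete exactly where the difficulty lies.
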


The last part of Theorem \ref{invertible} says that the operator norm of the inverse of $ RT_r(M)$ grows at most polynomially. Next we prove couple of lemmas that we need for the proof of the 
theorem.

\begin{lem} \label{cable} If $S_p:=S_p(A; \ \frac{q}{p})$ fibers over an annulus $A$ with an exceptional fiber of multiplicity $p>1$, the linear map
$RT_r(S_p)$ is invertible for all $r$ coprime to  $p$ and the operator norm  $||| RT_r(S_p)^{-1}||| $ grows at most polynomially in $r$.
\end{lem}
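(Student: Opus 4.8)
The plan is to realize $RT_r(S_p)$ as a multiplication operator in the Verlinde algebra and reduce both invertibility and the norm estimate to a single nonvanishing bound for a Gauss sum governed by $p$.

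First I would decompose $S_p$ geometrically. Over the annulus $A$, remove a small disk $D$ around the image of the exceptional fiber; the base becomes a pair of pants $P$ whose three cuffs lie over the two boundary circles of $A$ (giving the tori $T,T'$) and over $\partial D$ (giving a third torus $T_0$). Over $P$ there are no exceptional fibers, so the fibration is a product and $S_p=(P\times S^1)\cup_{T_0}W$, where $W$ is the fibered solid torus over $D$ whose core is the exceptional fiber of type $(q,p)$. Viewing $P\times S^1$ (with $S^1$ the Seifert fiber) as a cobordism with $T,T_0$ incoming and $T'$ outgoing, $RT_r(P\times S^1)$ is the multiplication map of the Verlinde algebra $V_r:=RT_r(T^2)$, while $RT_r(W)=:w\in V_r$. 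Hence, by functoriality of $RT_r$ and up to the phases allowed by Remark \ref{powers}, $RT_r(S_p)$ is the operator $M_w\colon x\mapsto x\cdot w$ of multiplication by $w$.

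Next I would exploit semisimplicity. The algebra $V_r$ is commutative and semisimple, and its minimal idempotents are $p_j=S_{1j}\sum_i S_{ij}\,e_i$ $(1\le j\le m)$ in the orthonormal basis $e_1,\dots,e_m$ of Theorem \ref{thm:basis}, where $S=(S_{ij})$ is the (real, symmetric, orthogonal) $S$-matrix; orthogonality of $S$ makes $\{p_j/\lVert p_j\rVert\}$ an orthonormal eigenbasis. Thus $M_w$ is normal and diagonal, with eigenvalues the algebra characters $\chi_j(w)=\sum_i w_i\,S_{ij}/S_{1j}$, where $w=\sum_i w_i e_i$. By the discussion in Remark \ref{normstuff}, $M_w$ is invertible if and only if all $\chi_j(w)\neq 0$, in which case
\[
\lvert\lvert\lvert RT_r(S_p)^{-1}\rvert\rvert\rvert=\frac{1}{\min_{1\le j\le m}\lvert\chi_j(w)\rvert}.
\]
So the lemma reduces to proving that, whenever $\gcd(p,r)=1$, every $\chi_j(w)$ is nonzero and $\min_j\lvert\chi_j(w)\rvert$ is bounded below by a fixed negative power of $r$.

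Finally I would compute $w$ and the characters. The empty skein in the solid torus $W$ is the vacuum $e_1$, but the identification $\partial W\simeq T_0$ is the fiber/section framing, under which the meridian of $W$ is the $(p,q)$-curve; hence $w=\rho_r(g)e_1$ for some $g\in SL_2(\mathbb{Z})$ whose first column is $(p,q)^{\mathsf T}$, with $\rho_r$ the representation of $\Gamma(T^2)$ from \eqref{repres}. Using $w_i=(\rho_r(g))_{i1}$, symmetry of $S$, and $\rho_r(S)=S$ (projectively), one gets $\chi_j(w)=(\rho_r(Sg))_{j1}/S_{1j}$. Now $Sg$ has lower-left entry $\pm p$, and since $S_{1j}\le 1$ the denominator only helps: the estimate comes down to the numerator $(\rho_r(Sg))_{j1}$, which is a finite Gauss sum whose modulus is governed by $p$. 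The main obstacle, and the only place the hypothesis enters, is the evaluation of this Gauss sum: I would show that $\gcd(p,r)=1$ makes the associated quadratic form nondegenerate, so that $(\rho_r(Sg))_{j1}\neq 0$ with modulus bounded below, uniformly in $j$, by a negative power of $r$; this yields both the invertibility and the polynomial bound $\lvert\lvert\lvert RT_r(S_p)^{-1}\rvert\rvert\rvert\le C r^{N}$. When $\gcd(p,r)>1$ the same sum can degenerate to $0$, which is exactly why coprimality to $p$ is required.
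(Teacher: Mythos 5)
The paper disposes of this lemma in one line: it observes that $S_p$ is a cable space and cites the explicit computation of $RT_r$ of cable spaces by Kumar and Melby (their Theorem 1.7), which gives the eigenvalues in closed form and hence both the invertibility and the polynomial bound. Your proposal instead reconstructs that computation from scratch: decompose $S_p$ as $(P\times S^1)\cup_{T_0}W$, identify $RT_r(S_p)$ with multiplication by $w=RT_r(W)$ in the Verlinde algebra, diagonalize via the $S$-matrix, and reduce everything to a lower bound on $\min_j|\chi_j(w)|$. This is a genuinely different (and more self-contained) route, and the formal TQFT part of it --- the pair-of-pants decomposition, the identification with a multiplication operator, normality, the formula $|||RT_r(S_p)^{-1}|||=1/\min_j|\chi_j(w)|$, and the reduction to the entries $(\rho_r(Sg))_{j1}$ with $Sg$ having lower-left entry $\pm p$ --- is correct and standard.

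The gap is in the last step, which is where all the actual content of the lemma lives. You assert that $\gcd(p,r)=1$ makes ``the associated quadratic form nondegenerate, so that $(\rho_r(Sg))_{j1}\neq 0$ with modulus bounded below.'' Nondegeneracy of the quadratic form is not by itself enough: the matrix entries of $\rho_r(h)$ in the basis $e_1,\dots,e_m$ are \emph{antisymmetrized} Gauss sums (sums over residues $n\equiv\pm j$ weighted by signs), and an antisymmetrized combination of two nondegenerate Gauss sums can still cancel. To finish, one must actually evaluate the sum (e.g.\ by Gauss-sum reciprocity), which produces eigenvalues of the shape of a ratio of quantum integers such as $[p'jq]/[j]$ up to roots of unity, and then check case by case that under the hypothesis $\gcd(p,r)=1$ the numerator never vanishes while the denominator contributes at most a factor of order $r$; this is precisely the computation carried out in Kumar--Melby. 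So your outline is a viable alternative proof, but as written it defers exactly the nonvanishing statement that the lemma is about, and the one sentence of justification you offer for it would not survive the antisymmetrization. Either carry out the reciprocity computation or, as the authors do, cite it.
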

\begin{proof}It is known that $S_p$ is a cable space and for such spaces the operators $RT_r(S_p)$ and the growth of their norm were explicitly
computed  by Kumar and Melby \cite[Theorem 1.7]{KuM2}.
\end{proof}

We will use $\Sigma_{g, n}$  (resp. $P_{g, n}$) to denote an orientable (resp. non-orientable), compact surface of genus $g$ and $n$ boundary components.
The second lemma we need is the following:
\begin{lem}\label{Klein} Suppose that  $S$ is one of the following Seifert fibered 3-manifolds:
\begin{enumerate} [(a)]
\item The trivial $S^1$ bundle over a torus with two holes $ \Sigma_{1, 2} $.
\item The twisted $S^1$-bundle over the Klein bottle with two holes $P_{1, 2}$.
\item The  twisted $S^1$-bundle over the Mobius band  with one hole $P_{0, 2}$.
\end{enumerate}
Then,
the linear map
$RT_r(S)$ is invertible for all odd $r\geq 3$ and the operator norm  $||| RT_r(S)^{-1}||| $ grows at most polynomially in $r$.
\end{lem}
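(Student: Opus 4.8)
The plan is to compute the operator $RT_r(S)$ explicitly in the orthonormal basis $e_1,\dots,e_m$ of $RT_r(T^2)$ provided by Theorem \ref{thm:basis} (equivalently, in the basis of primitive idempotents of the Verlinde algebra $V=RT_r(T^2)$, which is obtained from the $e_i$ by the unitary modular $S$-matrix), and to show that in each case $RT_r(S)$ is a normal operator whose eigenvalues are explicit products and ratios of $S$-matrix entries, each a nonzero scalar multiple of a sine $\sin(2\pi k/r)$ with $k$ an integer not divisible by $r$. Granting this, invertibility holds because no eigenvalue vanishes for the relevant $r$, while the elementary bound $|\sin(\pi k/r)|\ge \sin(\pi/r)\gtrsim 1/r$ shows that the smallest modulus eigenvalue decays at most polynomially. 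By Remark \ref{normstuff} this is exactly the assertion that $|||RT_r(S)^{-1}|||=n(RT_r(S))^{-1}$ grows at most polynomially. Throughout I will ignore the scalar ambiguities coming from Remark \ref{powers} and from the $\eta_r$-normalization of $RT_r$, as these are of unit modulus or themselves polynomially bounded in $r$ and affect neither invertibility nor the growth rate.

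For case (a) I would use the fact that reducing the TQFT along the $S^1$ fiber identifies $RT_r(\Sigma_{g,n}\times S^1)$ with the operator assigned to the surface $\Sigma_{g,n}$ by the two-dimensional TQFT whose Frobenius algebra is the Verlinde algebra $V$. Writing $\Sigma_{1,2}$ as a cylinder with one handle attached (a pair of pants with two legs glued) identifies $RT_r(\Sigma_{1,2}\times S^1)$ with the handle operator $H=m\circ\Delta$, where $m,\Delta$ are the multiplication and comultiplication. On the primitive idempotents $p_a$ one has $m(p_a\otimes p_b)=\delta_{ab}p_a$ and $\Delta(p_a)=S_{0a}^{-2}\,p_a\otimes p_a$, so $H$ is self-adjoint, positive, and diagonal with eigenvalues $S_{0a}^{-2}$. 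Hence $RT_r(\Sigma_{1,2}\times S^1)$ is invertible, since $S_{0a}\neq 0$ for every label $a$, and its inverse has eigenvalues $S_{0a}^{2}=O(1/r)$, giving $|||RT_r(S)^{-1}|||=\max_a S_{0a}^{2}=O(1/r)$.

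Cases (b) and (c) are the twisted $S^1$-bundles over the non-orientable bases $P_{1,2}$ and $P_{0,2}$. Here the same dimensional-reduction heuristic applies, but now in the \emph{unoriented} two-dimensional theory: traversing an orientation-reversing loop of the base reverses the fiber and acts on $V$ by the involution $\sigma$ induced by an orientation-reversing mapping class of $T^2$. Concretely, I would present each manifold, via an alternate Seifert fibration, as a union along an incompressible torus of cable spaces and trivial product pieces $T^2\times[0,1]$: the twisted bundle over the once-holed Möbius band $P_{0,2}$ refibers over an annulus with a single exceptional fiber of order two, so it is a cable space and Lemma \ref{cable} applies directly, while the twisted bundle over $P_{1,2}$ decomposes along a two-sided torus into such pieces, and $RT_r$ of the union is the corresponding composition of cobordism maps. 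The gluings are controlled by TQFT functoriality together with Lemma \ref{lemma:gen1mutation}, which permits replacing any regluing torus by the elliptic involution at no cost since $\iota\in\ker\rho_r$. In each case the resulting operator is again normal and diagonalized by the idempotent basis, with eigenvalues built from $S$- and $T$-matrix entries, that is, nonzero multiples of sines $\sin(2\pi k/r)$.

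The main obstacle is precisely cases (b) and (c): unlike the clean positive handle operator of (a), the twisted non-orientable bundles require understanding how fiber reversal acts on $RT_r(T^2)$ and how the cross-cap contributes, and then ruling out accidental vanishing of the eigenvalues for odd $r$. The two points that must be verified carefully are (i) that the alternate fibration of $P_{0,2}$ really is the order-two cable space, so that Lemma \ref{cable} is genuinely applicable, and (ii) that the sine factors entering the eigenvalues in (b) involve integers $k$ with $k\not\equiv 0 \pmod r$; this is what both guarantees invertibility and, through $|\sin(\pi k/r)|\ge\sin(\pi/r)$, furnishes the uniform inverse-polynomial lower bound on $n(RT_r(S))$ needed for the operator-norm estimate.
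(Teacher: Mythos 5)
Your treatment of case (a) is correct and is essentially the computation the paper quotes from \cite{BHMV2}: the cobordism $S^1\times\Sigma_{1,2}$ acts as the handle operator, diagonal in the idempotent basis with eigenvalues $S_{0a}^{-2}=r/(4\sin^2(2\pi a/r))$, all nonzero and with inverses of size $O(1/r)$. The difficulty is in cases (b) and (c), which you rightly flag as the crux but then resolve by a claim that is false. The manifold $S''=S^1\widetilde{\times}P_{0,2}$ does \emph{not} refiber over an annulus with a single exceptional fiber of order two. By the uniqueness of Seifert fibrations on compact orientable $3$-manifolds with nonempty boundary (the only exceptions being the solid torus and the twisted $S^1$-bundle over the M\"obius band $P_{0,1}$, i.e.\ the twisted $I$-bundle over the Klein bottle), the fibration of $S''$ over the non-orientable base $P_{0,2}$ is its unique one, so it cannot also fiber over an orientable annulus. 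Concretely, $H_1(S^1\widetilde{\times}P_{0,2})\cong \Z^2\oplus\Z/2$ (the fiber class is $2$-torsion because the monodromy around the core of the M\"obius band inverts it), whereas every cable space $S_p(A;\frac{q}{p})$ has torsion-free $H_1\cong\Z^2$. Hence Lemma \ref{cable} is not applicable to (c), and the proposed decomposition of $S^1\widetilde{\times}P_{1,2}$ ``into cable spaces and trivial products'' in (b) is likewise unavailable: cutting along vertical tori always leaves a piece fibering over a non-orientable subsurface. Your fallback for (b)--(c), an unoriented dimensional reduction with a fiber-reversal involution and crosscap insertions, is only sketched, and the two facts it would have to deliver --- non-vanishing of the eigenvalues for all odd $r$ and an inverse-polynomial lower bound on their moduli --- are exactly the content of the lemma; they are asserted, not derived.

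For comparison, the paper closes these two cases with two short structural arguments. For (b) it exhibits $S'=S^1\widetilde{\times}P_{1,2}$ as a genus one mutant of $S^1\times\Sigma_{1,2}$ (cut along a vertical torus and reglue by the elliptic involution), so Lemma \ref{lemma:gen1mutation} gives $RT_r(S')=RT_r(S)$ outright and (b) reduces to (a); you invoke mutation only as a bookkeeping device for gluings, whereas here it is the entire argument. For (c) it uses that gluing two copies of $P_{0,2}$ along a boundary circle yields $P_{1,2}$, so $S''\circ S''=S'$ as cobordisms $T^2\to T^2$; since $RT_r(S')=RT_r(S)$ is self-adjoint with distinct positive eigenvalues $\lambda_j$, the operator $RT_r(S'')$ is diagonal in the same basis with eigenvalues $\mu_j=\pm\sqrt{\lambda_j}$, giving $|\mu_j|^{-1}\leqslant 2/\sqrt{r}$. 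These are the two ingredients missing from your proposal.
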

\begin{proof} Let $S:= S^1\times  \Sigma_{1, 2} $ and $\partial S=T\cup T'$.
For $r=2m+1$, the operator $RT_r(S)$ is exactly the operator  $K$ computed in \cite[Section 5.10]{BHMV2}.
It is self-adjoint since it is symmetric with respect to the orthonormal basis
$e_1, \cdots, e_m$ of $RT_r(T^2)$. The eigenvalues of $K$ have been computed in  there and show to be

$$\lambda_j:= \frac{(-r)}{(q^{2j} -q^{-2j})^2}=\frac{r}{4\sin^2(\frac{2\pi j}{r})} , \ \ \textrm{where} \ \  j=1, \cdots, m,$$
and $q=e^{\frac{2\pi i}{r}}$.

Since $\lambda_j \neq 0$, the operator is invertible  for all odd $r\geq 3$ and the eigenvalues of the inverse are
$\lambda_j^{-1}:= \frac{(q^{2j} -q^{-2j})^2}{(-r)}$.
Since the operator is self-adjoint, to bound  $||| RT_r(S)^{-1}|||, $ it is enough to bound the eigenvalues.
Since
$$| \lambda_j^{-1}|\leqslant  \frac{4}{r},$$
the result follows. This finishes the proof of (a).

Let us now prove (b). Let $S':=S^1 \widetilde{\times} P_{1,2}$, be  the twisted $S^1$-bundle over the Klein bottle.
We claim that $S'$ is obtained from $S^1\times \Sigma_{1,2}$ 
by genus one mutation. To see this, consider an orientation reversing simple closed curve $\gamma$ on $P_{1,2}$.  Cutting $P_{1,2}$ along $\gamma$ and regluing by a diffeomorphism of $S^1$ that reverses the orientation, we get back $\Sigma_{1,2}$.  In the same way, cutting $S'$ along $S^1 \times \gamma$ and regluing by the elliptic involution, we get $S$, which finishes the proof of the claim.
Now by Lemma  \ref{lemma:gen1mutation} we get $RT_r(S')=RT_r(S)$, and the desired conclusion follows from part (a). 

Next we prove (c). Let $S'':=S^1\widetilde{\times}P_{0,2 }$, be the twisted $S^1$-bundle over the Mobius band with one hole. 
Note that gluing two copies of $P_{0,2}$ together along a boundary component, one gets $P_{1,2}$.  We also 
have that the square of $S''$ as a cobordism $T^2\longrightarrow T^2$  satisfies  $S''\circ S''=S'$.  Therefore,  $$RT_r(S''\circ S'')=RT_r(S')=RT_r(S).$$ Since $RT_r(S)$ is self-adjoint, it is diagonalizable in a Hermitian basis of $RT_r(T^2).$ However, since the eigenvalues $\lambda_j$ of $RT_r(S)$ are all distinct, $RT_r(S'')$ must be diagonalizable in the same basis, and its eigenvalues $\mu_j$ are square roots of the $\lambda_j$'s. Note that the $\lambda_j$'s are positive, hence the $\mu_j$'s are all real and $RT_r(S'')$ is self-adjoint.  Therefore, we get the inequality
$$ |\mu_j|^{-1}\leqslant \frac{2}{\sqrt{r}},$$
which implies $|||RT_r(S'')^{-1}|||\leq  \frac{2}{\sqrt{r}}.$

\end{proof}
\medskip

We are now ready to prove Theorem \ref{invertible}.

\begin{proof} Let $S=S(B; \ \frac{q_1}{p_1} \ldots  \frac{q_n}{p_n})$   be  Seifert fibered 3-manifold, with $\partial S= T\cup T'$.
The surface $B$ can be cut along a disjoint union  of simple closed curves $\Gamma$ into annuli each of which  contains exactly one orbifold point of the fibration, or two-holed tori or one-holed Mobius bands that contain no orbifold point. Moreover, the curves of $\Gamma$ may be chosen so that each  is separating in $B$.  The inverse image of $\Gamma$ under the Seifert fibration map
 $S\longrightarrow B$ is a collection $ {\mathcal T}$ of tori in $S$ each of which is vertical with respect to the fibration.
 By construction, for each  component $S_i$ of  $S\setminus {\mathcal T}$ there are the following possibilities:
 
 \begin{enumerate} [(a)]
\item $S_i$ fibers over an annulus with one exceptional fiber.

\item $S_i$ fibers over $ \Sigma_{1, 2} $ with no exceptional fibers. Hence,
it is the trivial $S^1$ bundle over  $ \Sigma_{1, 2} $.
\item $S_i$ fibers over the Mobius band with one hole and no exceptional fibers. Hence, it is the  twisted $S^1$-bundle over $P_{0, 2}$.
\end{enumerate}
  
  Moreover, since each curve of $\Gamma$ is separating, $S$ as a cobordism is a composition of the cobordisms $S_i.$
  Since $RT_r$ is a TQFT operator, and with the understanding of Remark \ref{powers}, we have
  
  $$RT_r(S)=RT_r(S_m)\circ \cdots  \circ RT_r(S_1)  \circ RT_r(S_0).$$

 By Lemma  \ref{Klein} if $S_i$ is as in (b)-(c) above, then $RT_r(S_i)$ is invertible for all $r$ and if $S_i$ is as in  case (a) then by Lemma \ref{cable} $RT_r(S_i)$ is invertible for all $r$ coprime to the multiplicity
 of the exceptional fiber. 
 It follows that, for all odd $r$ coprime to $p_1, \cdots, p_n$,  $RT_r(S)$ is invertible with inverse
 $$RT_r(S)^{-1}=RT_r(S_0)^{-1}\circ RT_r(S_1)^{-1} \circ \cdots    \circ RT_r(S_m)^{-1}.$$
 Also by Lemmas  \ref{Klein} and \ref{cable}, for $i=0, \cdots n$, the operator norm  of the inverses $||| RT_r(S_i)^{-1}||| $ grows at most polynomially in $r$.
 Since $||| \cdot |||$ is sub-multiplicative under composition of linear operators it follows that
 there are constants $C$ and $N>0$ such that
 $$||| RT_r(S)^{-1}||| \leqslant  C R^N.$$

\end{proof}

\section{ Gluing Theorems}
\subsection{Seifert cobordisms} In this section we prove the following theorem, which in particular implies Theorem \ref{seifertintro} of the Introduction.

\begin{thm}\label{seifertglue} Let $S=(B; \ \frac{q_1}{p_1} \ldots  \frac{q_n}{p_n})$ be a Seifert fibered 3-manifold with at least two boundary components and let $M$ be any 3-manifold  with toroidal boundary.
Then, for any 3-manifold $M'$ obtained by gluing $S$  along a component of $T'\subset \partial S$ to a component of $\partial M$,
there exist constants $A$ and $K>0$ such that
$$\frac{r^{-K}}{A} TV_r(M) \leqslant TV_r(M') \leqslant Ar^K TV_r(M).$$
Here the upper inequality holds for all odd $r$, while the lower inequality holds for
all  $r$ coprime to $p_1, \cdots, p_n$.

 In particular, if the limsup in the definition of $LTV(M)$ is actually a limit, then $LTV(M')=LTV(M).$
\end{thm}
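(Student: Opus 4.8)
The plan is to realize the gluing as a contraction in the TQFT and then compare the Hermitian norms of the vectors $RT_r(M)$ and $RT_r(M')$ through the cobordism map of $S$. Write $\partial M = T_0\cup R$ with $T_0$ the component glued to $T'\subset\partial S$, and $\partial S = T'\cup S_\partial$, so that $\partial M' = R\cup S_\partial$. Regarding $S$ as a cobordism from $T'$ to $S_\partial$ gives a linear map $\Phi:=RT_r(S)\colon RT_r(T')\to RT_r(S_\partial)$, and, with the understanding of Remark \ref{powers}, the gluing axiom of the TQFT yields
$$RT_r(M') = (\Phi\otimes \mathrm{id}_{RT_r(R)})\big(RT_r(M)\big),$$
after identifying $RT_r(T_0)\simeq RT_r(T')$ via the Hermitian form. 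Since tensoring with an identity changes neither the operator norm nor the smallest singular value $n(\cdot)$, Theorem \ref{thm:TV-RT} reduces the statement to the two estimates
$$n(\Phi)^2\, TV_r(M)\leqslant TV_r(M') \leqslant |||\Phi|||^2\, TV_r(M),$$
so it suffices to bound $|||\Phi|||$ from above and $n(\Phi)=\min_{||x||=1}||\Phi(x)||$ from below, both polynomially in $r$.

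The upper estimate holds for all odd $r$ and is exactly the gluing inequality of \cite{DKGromov} (it also follows from polynomial growth of $|||\Phi|||$), so I would cite it. The content is the lower estimate: that $\Phi$ is injective with $n(\Phi)^{-1}$ of at most polynomial growth for all odd $r$ coprime to $p_1,\ldots,p_n$. The main obstacle is that $S$ may have more than two boundary components, so $\Phi$ is generally not square and Theorem \ref{invertible} does not apply to it directly. I would remove this obstacle by capping off. Fix a second boundary component $T$ of $S$ (one exists because $S$ has at least two), and Dehn-fill each component of $S_\partial$ other than $T$ along a slope meeting the Seifert fiber exactly once. Such fillings extend the Seifert fibration and introduce no new exceptional fiber, so the result $\hat S$ is an orientable Seifert fibered manifold with $\partial\hat S = T\cup T'$ and the same multiplicities $p_1,\ldots,p_n$. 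Theorem \ref{invertible} then gives that $RT_r(\hat S)\colon RT_r(T')\to RT_r(T)$ is invertible with $|||RT_r(\hat S)^{-1}|||\leqslant CR^N$ for $r$ coprime to the $p_i$.

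It remains to transport this bound to $\Phi$. In the TQFT each capping corresponds to contracting the corresponding $RT_r(T_i)$-factor of $\Phi$ against the solid-torus vector $u_i = RT_r(V_i)\in RT_r(T_i)$, and contracting all of them recovers $RT_r(\hat S)$. Because the mapping class group of the torus acts on $RT_r(T^2)$ by projective isometries of the positive definite Hermitian form of Theorem \ref{thm:basis}, each $u_i$ is a unit vector; hence contraction against $u_i$ is an operation of norm $||u_i|| = 1$, and $||RT_r(\hat S)(x)||\leqslant \big(\prod_i ||u_i||\big)\,||\Phi(x)|| = ||\Phi(x)||$ for all $x$. Taking the minimum over unit vectors and using $n(A)=|||A^{-1}|||^{-1}$ from Remark \ref{normstuff} gives
$$n(\Phi)\;\geqslant\; n\big(RT_r(\hat S)\big)\;=\;|||RT_r(\hat S)^{-1}|||^{-1}\;\geqslant\;\frac{1}{CR^N}.$$
Substituting the two bounds into the displayed inequality proves the theorem with $K=2N$ and a suitable constant $A$.

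For the final assertion I would apply $\frac{2\pi}{r}\log(\cdot)$ to the two-sided inequality; the prefactors contribute $\frac{2\pi}{r}(\log A + K\log r)\to 0$ and hence do not affect the asymptotics. The upper bound, valid for all odd $r$, gives $\limsup_r \frac{2\pi}{r}\log TV_r(M')\leqslant LTV(M)$, while the lower bound, valid along the cofinal set of $r$ coprime to the $p_i$, combined with the hypothesis that $LTV(M)$ is a genuine limit (so its value is attained along every cofinal subsequence), gives $LTV(M')\geqslant LTV(M)$. The two together yield $LTV(M')=LTV(M)$.
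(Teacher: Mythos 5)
Your proposal is correct, and for the heart of the matter --- the lower bound when $S$ has three or more boundary components --- it takes a genuinely different route from the paper. The paper splits into two cases: when $\partial S$ has exactly two components it applies Theorem \ref{invertible} directly (tensoring with the identity on the unglued part of $\partial M$, exactly as you do); when $\partial S$ has $n\geq 3$ components it cuts the base orbifold along a curve separating off a $\Sigma_{0,n}$ with no cone points, writes $M'$ as the result of first gluing a two-boundary Seifert piece (Case 1) and then gluing $S^1\times\Sigma_{0,n}$, and invokes the invertible cabling space result (Corollary \ref{invertiblecable}, i.e.\ \cite[Corollary 8.3]{DKGromov}) for the second gluing. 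You instead avoid the case split by Dehn filling the extra boundary components of $S$ along fiber-dual slopes to produce a two-boundary Seifert manifold $\hat S$ with the same multiplicities, and then transfer the bound $n(RT_r(\hat S))\geq (Cr^N)^{-1}$ from Theorem \ref{invertible} back to $n(\Phi)$ via the observation that contracting against the unit solid-torus vectors $u_i$ can only decrease norms. That contraction/unitarity argument is sound (it is essentially the same mechanism the paper uses later in the proof of Proposition \ref{notexpo}), and the fact that a filling meeting the fiber once extends the fibration with no new exceptional fiber is standard. What your approach buys is independence from the invertible-cabling-space machinery of \cite{DKGromov} and a uniform treatment of all boundary configurations; what the paper's approach buys is that it reuses an already-established black box rather than introducing the norm-monotonicity-under-capping lemma. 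Your derivation of the final $LTV$ statement from the two-sided polynomial inequalities matches the paper's.
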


By the classification theorem of manifolds that admit Seifert fibrations manifolds with more than two boundary components admit unique such fibrations. Hence the integers
$p_1, \cdots, p_n$ are uniquely determined by the the 3-manifold and vice versa. Setting $I:=\{p_1, \cdots, p_n\}$,  we obtain Theorem \ref{seifertintro}.
\smallskip

\subsection{Invertible cable spaces} For the proof of Theorem \ref{seifertglue} we need to recall the notion of an invertible cable space from \cite{DKGromov}.
\begin{defin}{\rm {Let $S$ be a 3-manifold with toroidal boundary with a distinguished torus boundary component $T$ and such that $\partial S$ has at least three boundary components.
$S$ 
 is called an invertible cabling space if it has zero simplicial volume (i. e. $\mathrm{Vol}(S)=0$)
and there is a Dehn filling along some components of $\partial S$ distinct from $T$ that produces a 3-manifold  homeomorphic to $T \times [0,1]$.}}
\end{defin}

\begin{cor}\label{invertiblecable} \cite[Corollary 8.3]{DKGromov} Let $M$ be a 3-manifold with toroidal boundary and $S$ be an invertible cabling space.
Let $M'$ be obtained by gluing a component of $\partial S\setminus T$ to a component of $\partial M$.  Then,
there exist constants $A$ and $K>0$ such that
$$TV_r(M) \leqslant TV_r(M') \leqslant Ar^K TV_r(M)$$
for all odd  $r\geq 3$.
In particular, we have
$LTV(M)=LTV(M')$.
\end{cor}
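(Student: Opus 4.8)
The plan is to express the gluing purely through the Reshetikhin--Turaev cobordism operator of $S$ and then to control that operator using Theorem \ref{invertible} together with Corollary \ref{invertiblecable}. By Theorem \ref{thm:TV-RT} I may write $TV_r(N)=||RT_r(N)||^2$ for every $N$ with toroidal boundary, so the desired two-sided estimate on $TV_r$ is equivalent to a two-sided estimate relating the Hermitian norms $||RT_r(M')||$ and $||RT_r(M)||$.

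I would first dispose of the model case in which $S$ has exactly two boundary components $T\cup T'$. Write $\partial M=T_0\sqcup W_0$ with $T_0$ the gluing torus, so that $RT_r(M)\in RT_r(T_0)\otimes RT_r(W_0)$. By the gluing axiom (with the convention of Remark \ref{powers}), capping $M$ with $S$ along $T_0=T'$ amounts, up to the natural identifications, to applying $RT_r(S)\otimes\mathrm{id}_{RT_r(W_0)}$ to $RT_r(M)$, so that $RT_r(M')=(RT_r(S)\otimes\mathrm{id})\,RT_r(M)$. Theorem \ref{invertible} provides invertibility of $RT_r(S)$ together with $|||RT_r(S)^{-1}|||\leqslant Cr^N$ for all odd $r$ coprime to $p_1,\dots,p_n$. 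Since tensoring with an identity alters neither the operator norm nor the least singular value, the relation $n(A)=|||A^{-1}|||^{-1}$ of Remark \ref{normstuff} gives
$$|||RT_r(S)^{-1}|||^{-1}\,||RT_r(M)||\ \leqslant\ ||RT_r(M')||\ \leqslant\ |||RT_r(S)|||\,||RT_r(M)||.$$
Squaring and substituting the polynomial bound on $|||RT_r(S)^{-1}|||$ yields the lower inequality; the upper inequality follows either from the analogous polynomial bound on $|||RT_r(S)|||$ or directly from \cite{DKGromov}, and holds for all odd $r$.

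For the general case, in which $S$ has three or more boundary components, I would reduce to the model case by the vertical-torus decomposition already used to prove Theorem \ref{invertible}. Cutting the base orbifold along a maximal system of disjoint separating curves exhibits $S$ as a union, along vertical tori, of elementary Seifert pieces of two sorts: two-torus cobordisms (cable spaces over an annulus with one exceptional fiber as in Lemma \ref{cable}, and the bundles over $\Sigma_{1,2}$, $P_{1,2}$, $P_{0,2}$ as in Lemma \ref{Klein}), and trivially fibered pairs of pants. A pants bundle has three boundary tori, zero simplicial volume, and becomes $T^2\times[0,1]$ after Dehn filling one boundary component along the base slope, so it is an invertible cabling space to which Corollary \ref{invertiblecable} applies. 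As the cutting curves are separating, the dual graph of the decomposition is a tree; rooting it at the piece containing $T'$, I would build $M'$ from $M$ one piece at a time, each piece glued to the current manifold along a single torus with its remaining tori becoming new boundary. Attachments along two-torus stretches of the tree are controlled by the model-case estimate via Theorem \ref{invertible}, while attachments of pants are controlled by Corollary \ref{invertiblecable}; in either case the norm of the resulting vector changes by a factor pinched between $c\,r^{-k}$ and $c'\,r^{k}$. Since there are finitely many pieces and $|||\cdot|||$ is submultiplicative, multiplying the per-step estimates gives the asserted two-sided bound, with the upper inequality valid for all odd $r$ and the coprimality restriction on the lower inequality inherited solely from the cable pieces, whose exceptional multiplicities are the $p_i$. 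The statement about $LTV$ is then immediate: applying $\tfrac{2\pi}{r}\log(\cdot)$ to the inequalities, the contributions $\tfrac{2\pi}{r}\log(Ar^{\pm K})$ tend to $0$, so under the limit hypothesis $LTV(M')=LTV(M)$.

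I expect the main obstacle to lie not in the analysis but in the topological bookkeeping of this last step: one must check that the decomposition tree can always be traversed so that at each stage a single elementary piece is glued to the current manifold along exactly one torus, and that the hypotheses of Theorem \ref{invertible} and Corollary \ref{invertiblecable}---toroidal boundary, the correct choice of distinguished component, invertibility, and coprimality to the relevant multiplicities---hold at every stage. The quantitative input, namely invertibility together with polynomial growth of the inverse operator norms, is already furnished by the preceding lemmas; the genuine work is arranging the topology so that these inputs compose without loss.
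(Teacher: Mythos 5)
This statement is not proved in the paper at all: it is imported verbatim from \cite[Corollary 8.3]{DKGromov}, and in the paper's logical architecture it is an \emph{input} to the proof of Theorem \ref{seifertglue} (Case 2 uses it, via Lemma \ref{lem:product}, to handle the piece $S^1\times\Sigma_{0,n}$). Your proposal reverses this order and is circular on its face: you invoke Corollary \ref{invertiblecable} itself, twice, as an ingredient in its own proof (once in the opening sentence and again to ``control the pants pieces''). Beyond the circularity, you have misread the hypotheses. An invertible cabling space is defined only by having toroidal boundary with at least three components, zero simplicial volume, and a Dehn filling of some components of $\partial S$ distinct from $T$ yielding $T\times[0,1]$; it is \emph{not} assumed to be Seifert fibered, so the vertical-torus decomposition, the exceptional multiplicities $p_1,\dots,p_n$, and the appeal to Theorem \ref{invertible} (which moreover requires exactly two boundary components) are not available. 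Finally, even where your argument would go through, it proves the wrong inequality: it yields a lower bound of the form $\frac{r^{-K}}{A}TV_r(M)\leqslant TV_r(M')$ restricted to $r$ coprime to certain integers, whereas the corollary asserts the clean bound $TV_r(M)\leqslant TV_r(M')$, with constant $1$ and no polynomial loss, for \emph{all} odd $r\geqslant 3$. That is the conclusion of Theorem \ref{seifertglue}, not of this statement.

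The missing idea is precisely the defining Dehn-filling property, which your proposal never uses. Since filling the components of $\partial S$ other than $T$ and the glued one turns $S$ into $T\times[0,1]$, the manifold $M$ is recovered from $M'$ by Dehn filling; since $TV_r$ does not increase under Dehn filling (a result of \cite{DKGromov}), one gets $TV_r(M)\leqslant TV_r(M')$ for every odd $r$ with no constants and no coprimality condition. The upper bound then comes from the general gluing inequality $TV_r(M')\leqslant TV_r(S)\cdot TV_r(M)$ together with $TV_r(S)\leqslant Ar^K$, which holds because $\mathrm{Vol}(S)=0$. No invertibility of any cobordism operator, and no spectral estimate, is needed.
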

 
 We will need the following:
\begin{lem}\label{lem:product} For any  $n\geq 3$,  $S:=S^1\times \Sigma_{0, n}$ is an invertible cable space.
\end{lem}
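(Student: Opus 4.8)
The plan is to verify directly the three defining properties of an invertible cabling space for $S=S^1\times \Sigma_{0,n}$. First, since $\Sigma_{0,n}$ is a sphere with $n$ disks removed, its boundary consists of $n$ circles $C_1,\dots,C_n$, and hence $\partial S$ consists of the $n$ tori $S^1\times C_1,\dots,S^1\times C_n$. As $n\geq 3$, the manifold $S$ has at least three boundary components; I would designate $T:=S^1\times C_1$ as the distinguished boundary torus. Second, $S$ is a trivial $S^1$-bundle over $\Sigma_{0,n}$, hence a Seifert fibered space, so its geometric decomposition contains no hyperbolic pieces and therefore $\mathrm{Vol}(S)=0$.

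The key step is to exhibit a Dehn filling along boundary components distinct from $T$ that produces $T\times[0,1]$. On each boundary torus $S^1\times C_i$ I would single out the two natural slopes: the fiber $f_i=S^1\times\{\mathrm{pt}\}$ and the section $s_i=\{\mathrm{pt}\}\times C_i$. Filling $S^1\times C_i$ along the section slope $s_i$ amounts to gluing in the solid torus $S^1\times D^2$, where $D^2$ is a disk capping the circle $C_i$; its meridian $\{\mathrm{pt}\}\times\partial D^2$ is exactly $s_i$. Thus this filling caps off the boundary circle $C_i$ of the base surface, replacing $\Sigma_{0,n}$ by $\Sigma_{0,n-1}$ and producing $S^1\times\Sigma_{0,n-1}$.

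Carrying out this filling along the slopes $s_3,\dots,s_n$ on the $n-2$ tori $S^1\times C_3,\dots,S^1\times C_n$, all distinct from $T$, caps off $n-2$ of the boundary circles and yields $S^1\times\Sigma_{0,2}$. Since $\Sigma_{0,2}$ is an annulus $S^1\times[0,1]$, the resulting manifold is $S^1\times S^1\times[0,1]\cong T\times[0,1]$, with the two remaining boundary tori being $T=S^1\times C_1$ and $S^1\times C_2$. This verifies all the requirements and shows that $S$ is an invertible cable space. The only point requiring care is the slope bookkeeping on the boundary tori: one must check that the section slope $s_i$ really is the meridian of the capping solid torus $S^1\times D^2$, and distinguish it from the fiber slope $f_i$, after which the reduction to the annulus case is immediate.
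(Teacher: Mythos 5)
Your proof is correct and follows essentially the same route as the paper: both arguments note that $\mathrm{Vol}(S)=0$ because $S$ is an $S^1$-bundle, and then fill all but one of the non-distinguished boundary tori along the slopes that cap off the corresponding boundary circles of the base surface, reducing to the trivial bundle over an annulus, i.e.\ $T\times[0,1]$. Your version merely makes the slope bookkeeping (section slope versus fiber slope) explicit, which the paper leaves implicit under the phrase ``trivial Dehn filling.''
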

\begin{proof}
Since $S$ is an $S^1$-bundle, $\mathrm{Vol}(S)=0$.
Designate one component $T\in \partial S$ as the  distinguished component. Now, the trivial Dehn filling along  all but one of the tori in $\partial S\setminus T$,
produces the trivial $S^1$- bundle over an annulus, that is $S^1\times S^1\times [0, 1]$, where $T=S^1\times S^1$.

\end{proof}

\subsection{Proof of Theorem \ref{seifertglue}} 
By construction, 
$M'$ is obtained by gluing   a boundary component of  $S$ to a component of $\partial M$.
Since $S$ is a Seifert fibered manifold,  by \cite[Theorem 5.2]{DKGromov} (and its proof) we have
 
 $$ TV_r(M') \leqslant TV_r(S)\cdot  TV_r(M),$$
 for all odd integers $r\geq 3$. On the other hand, since $S$ is a Seifert fibered 3-manifold,  by
  \cite[Theorem 5.2]{DKGromov}, there are constants $A$ and $N>0$ such that
 $TV_r(S') \leqslant Ar^N$, for all for all odd integers $r\geq 3$. Thus, we have  $LTV(S) \leqslant  0$.
 Hence, the upper inequality in the statement of the theorem follows, and in particular, we have $$LTV(M') \leqslant LTV(M).$$
 \vskip 0.03in
 
 For the proof of the lower inequality we will distinguish two cases:
   \vskip 0.05in

  \noindent {\em {Case 1}.} Suppose that $S$ has exactly two boundary components, say $T$ and $T'$.
 By Theorem \ref{invertible}  the linear map $RT_r(S):\ RT_r(T) \rightarrow RT_r(T')$, is invertible for all odd $r$ coprime to  $p_1, \cdots, p_n$.
Furthermore, there are constants $C$ and $K>0$ such that
 \begin{equation} \label{poly}
 ||| RT_r(S)^{-1}||| \leqslant  C r^N.
 \end{equation}

If $M$ has only one boundary component, then $RT_r(M)$ is a vector in $RT_r(T^2)$, and
by the TQFT properties, we have that $$RT_r(M')=RT_r(S)(RT_r(M)).$$

 Now we can write $RT_r(S)^{-1}(RT_r(M'))=RT_r(M),$ and hence
 $$ ||RT_r(M)||\leqslant |||RT_r(S)^{-1}||| \cdot ||RT_r(M')||,$$
which in turn gives
$$
|||RT_r(S)^{-1}|||^{-1} \cdot ||RT_r(M)||\leqslant ||RT_r(M')||.
$$
The last inequality combined with (\ref{poly}) gives
\begin{equation}\label{adjust}
\frac{r^{-N}}{C} TV_r(M) \leqslant TV_r(M').
\end{equation}
Finally, by adjusting the constants $K,N, A, C$ we get the desired result.
\vskip 0.05in

If $M$ has more than one boundary components, let $T_1$ denote the one that is used to glue $S$.
Then, $M'$ may equivalently be seen as obtained from $M$ by gluing the cobordism $$S':=S \coprod (\partial M \setminus T_1)\times [0,1]$$ onto $\partial M$.
 The latter is a cobordism $\partial M \rightarrow \partial M$, and by the TQFT properties, $RT_r(S')$ is invertible.
 We claim that $|||RT_r(S')^{-1}|||=|||RT_r(S)^{-1}|||$.  Indeed, $RT_r$ is a monoidal functor, so $$RT_r(S')=RT_r(S)\otimes \mathrm{id}_{RT_r(\partial M \setminus T_1)}.$$
  Moreover, the operator norm $||| \cdot |||$ is multiplicative under tensor product of Hermitian vector spaces and maps. The remaining of the claim follows exactly as before.

  \vskip 0.07in

  \noindent {\em {Case 2}.} Suppose that $S$ has $n\geq 3$ boundary components and $n$ exceptional fibers of orders $p_1,\ldots,p_n.$ Pick a curve $\gamma$ in the orbifold $B$ that separates it into a surface $\Sigma_{0,n}$ (containing no orbifold point) and an orbifold $B'$ with exactly two boundary components. We can furthermore assume that the boundary component of $S$ glued onto $M$ corresponding to a curve in $B'.$ Taking the pre-images under the Seifert fibration, we see that $M'$ is obtained from $M$ by first gluing a Seifert manifold with two boundary components on a torus boundary component of $M,$ obtaining a $3$-manifold $M_0$, and  then gluing $S^1\times \Sigma_{0,n}$ on a boundary component of $M_0.$
  
  By Case 1, there exists constants $A,K>0$ such that
  $$\frac{1}{Ar^K}TV_r(M)\leqslant TV_r(M_0) \leqslant Ar^K TV_r(M)$$
  for any $r$ coprime to all of the integers $p_i.$
  By Corollary \ref{invertiblecable}, there exists constants $B,L>0$ such that
   $$
   \frac{1}{Br^L}TV_r(M_0)\leqslant TV_r(M') \leqslant Br^L TV_r(M_0)
   $$
   for any odd $r\geq 3$. Therefore, we get the desired inequalities in this case as well.\qed
   
\medskip
\subsection{Plumbed  Cobordisms}
A \emph{graph manifold} $G$ is a 3-manifold that can be decomposed into Seifert fibered spaces by cutting along a collection ${\mathcal T}$ of incompressible tori. To any graph manifold $G$ we associate a graph $T(G)$
with vertices corresponding to components of $G\setminus {\mathcal T}$ and each edge corresponds to a torus in ${\mathcal T}$ along which the manifolds corresponding are glued. The leaves are vertices of valence one.
In the case that above graph  is a tree we will say that $M$ is a  \emph{plumbed manifold}.
The following generalizes Theorem \ref{seifertglue}  to plumbed 3-manifolds.

\begin{cor}\label{graphglue}Let $G$ be a plumbed 3-manifold  such that each leaf on $T(G)$
has  at least one boundary component coming from a component of $\partial G$, and let
$M$ be any 3-manifold  with  non-empty toroidal boundary.
Then, for any 3-manifold $M'$ obtained by gluing $G$  along a component $T'\subset \partial G$ to a component of $\partial M$,
there exist constants $A$ and $K>0$  and a finite set  $I$ of non-zero integers such that
$$
\frac{r^{-K}}{A} TV_r(M) \leqslant TV_r(M') \leqslant Ar^K TV_r(M)$$
Here the upper inequality holds for all odd integers $r>2$ and the lower inequality holds for all $r$
not divisible by any of the numbers in $I$.

 In particular, if the limsup in the definition of $LTV(M)$ is actually a limit, then $LTV(M')=LTV(M)$.
\end{cor}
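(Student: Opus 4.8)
The plan is to reduce the statement to an iterated application of Theorem~\ref{seifertglue}, using the tree structure of $T(G)$ to attach the Seifert pieces of $G$ to $M$ one at a time. Let $\mathcal{T}$ be the collection of tori cutting $G$ into Seifert pieces $S_v$, indexed by the vertices $v$ of $T(G)$, and let $v_0$ be the vertex whose piece contains the gluing torus $T'$. Rooting $T(G)$ at $v_0$, I would enumerate the vertices as $v_1,\ldots,v_N$ with $v_1=v_0$ so that every vertex appears after its parent, and set $S_j:=S_{v_j}$. Then $M'$ is built from $M_0:=M$ by the chain $M_j:=M_{j-1}\cup_{T_j} S_j$ for $j=1,\ldots,N$, where $T_1=T'$ and, for $j\geq 2$, $T_j\in\mathcal{T}$ is the torus joining $S_j$ to its already attached parent piece; by construction $M_N=M'$.

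At each stage I would apply Theorem~\ref{seifertglue} to the gluing of the single Seifert piece $S_j$ onto $M_{j-1}$ along $T_j$. This requires two things at every step: that $M_{j-1}$ has non-empty toroidal boundary, which holds because $T_j\subset\partial M_{j-1}$; and that $S_j$ has at least two boundary components, which is exactly where the hypothesis on $G$ enters. Indeed, for $2\leq j\leq N$: if $v_j$ is an internal vertex of the rooted tree it has a child, so $S_j$ carries the parent torus $T_j$ together with at least one torus of $\mathcal{T}$ to a child; and if $v_j$ is a leaf, the hypothesis supplies a component of $\partial G\subset\partial S_j$ in addition to $T_j$. For the root $S_{v_1}$ the torus $T'=T_1$ plays the role of the parent torus and, as long as $N\geq 2$, $v_1$ has a child supplying a second component (when $N=1$ the corollary is just Theorem~\ref{seifertglue} applied to $G$ itself). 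Thus Theorem~\ref{seifertglue} gives, for each $j$, constants $A_j,K_j>0$ and the finite set $I_j$ of multiplicities of the exceptional fibers of $S_j$ with
\[
\frac{r^{-K_j}}{A_j}\,TV_r(M_{j-1})\leqslant TV_r(M_j)\leqslant A_j\, r^{K_j}\,TV_r(M_{j-1}),
\]
the upper bound valid for all odd $r>2$ and the lower bound for $r$ coprime to $I_j$.

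Composing these $N$ comparisons yields the corollary with $A:=\prod_{j=1}^N A_j$, $K:=\sum_{j=1}^N K_j$, and $I:=\bigcup_{j=1}^N I_j$, a finite set of positive (hence non-zero) integers; the upper inequality then holds for all odd $r>2$ and the lower for all $r$ not divisible by any element of $I$. For the final assertion, the upper inequality gives $LTV(M')\leqslant LTV(M)$ unconditionally after multiplying by $\frac{2\pi}{r}$, taking logarithms, and letting $r\to\infty$, since the factor $A r^K$ contributes $O\!\left(\tfrac{\log r}{r}\right)\to 0$; and if the limsup defining $LTV(M)$ is actually a limit, then restricting to the integers $r$ coprime to $I$ and using the lower inequality gives $LTV(M')\geqslant LTV(M)$, so the two are equal.

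I expect the only real subtlety to be the combinatorial bookkeeping rather than any analysis: concretely, the claim that the rooting of $T(G)$ at $v_0$ can be chosen so that every attached piece $S_j$ has at least two boundary components. All three regimes above---internal vertex, leaf, and root---must be checked, and it is precisely the hypothesis that each leaf of $T(G)$ meets $\partial G$ that rules out attaching a Seifert piece with a single boundary torus, to which Theorem~\ref{seifertglue} would not apply. The analytic content, including the invertibility of the relevant TQFT maps and the polynomial control of the inverse operator norms, is entirely absorbed into Theorem~\ref{seifertglue}.
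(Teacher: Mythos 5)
Your proof is correct and takes essentially the same approach as the paper: the paper argues by induction on the number of edges of $T(G)$, removing a leaf Seifert piece at each step and applying Theorem~\ref{seifertglue} to it, which is your root-outward iteration read in the opposite order. The key point in both arguments --- that the hypothesis on the leaves of $T(G)$, together with the tree structure, guarantees every piece being attached has at least two boundary components --- is identical.
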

\begin{proof} The proof is by induction on the number of edges of $T(G)$.
If there are no edges,  the conclusion follows from Theorem \ref{seifertglue} where the set $I$ is the set of multiplicities
of the exceptional fibers of $G_1$.

Otherwise,  remove from $T(G)$ an edge $e$  that ends  to a leaf $S$ to obtain a tree $T(G_1)$,
associated to a graph manifold $G_1$. 
By hypothesis $S$ has a boundary component which comes from $\partial G$. 
Suppose that the edge $e$ corresponds to  
a torus $T'\in { \mathcal T}$. Now cutting $G$ along $T'$ we obtain two 3-manifolds: One is the graph manifold $G_1$ above and the second is 
the Seifert fibered manifold $S$, which by hypothesis has at least two boundary components. Let $M_1$ denote the 3-manifold obtained by gluing $G_1$ 
to $M$ along the component of $\partial G_1$ that corresponds to the component of $\partial G$ glued along $\partial M$ in the construction of $M'$.
Now $M'$ is obtained by gluing $S$ to $M_1$ along a boundary component.
The result follows by applying the induction hypothesis to $M$ and $G_1$ and Theorem \ref{seifertglue} to $M_1$ and $S$.

\end{proof}

\section{Volume Conjecture applications}
In this section, we discuss applications of Theorems  \ref{seifertglue}  and \ref{graphglue} to 
Conjecture \ref{TVvolumeconj}. We will use properties about the behavior of the Gromov norm (and hence simplicial volume)
under the operation of gluing 3-manifolds along spheres and tori. For details we refer the reader to \cite{ThurstonGT3manifolds}.

\begin{thm}\label{vc} Let $M$ be a 3-manifold with non-empty boundary such that
\begin{equation}\label{limit}
\underset{r \rightarrow \infty, \ r \ \textrm{odd}}{\lim} \frac {2\pi}  {r} \log |TV_r(M)|=\mathrm{Vol}(M).
\end{equation}
Suppose that $M'$ is obtained from $M$ by gluing to a component $T'\subset \partial M$, either
\begin{enumerate} [(a)]
\item a Seifert fibered space $S$ as in Theorem \ref{seifertglue}; or 
\item a plumbed 3-manifold $G$ as in Corollary \ref{graphglue}. 
  \end{enumerate}
Then $\mathrm{LTV}(M')=\mathrm{Vol}(M')$.
\end{thm}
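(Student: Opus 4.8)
The plan is to split the statement into a quantum half that controls $LTV(M')$ and a geometric half that controls $\mathrm{Vol}(M')$, and then to fuse the two using the hypothesis $LTV(M)=\mathrm{Vol}(M)$. The main work has already been done in Theorem \ref{seifertglue} and Corollary \ref{graphglue}, so the proof should amount to assembling these with a standard additivity property of simplicial volume.

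First I would dispatch the quantum half. The hypothesis (\ref{limit}) says precisely that the limsup defining $LTV(M)$ is a genuine limit, so I can invoke the final assertion of Theorem \ref{seifertglue} in case (a), and of Corollary \ref{graphglue} in case (b), verbatim: under exactly this limit hypothesis both conclude $LTV(M')=LTV(M)$. Together with $LTV(M)=\mathrm{Vol}(M)$ this already gives $LTV(M')=\mathrm{Vol}(M)$. If one prefers to see it directly, one applies $\tfrac{2\pi}{r}\log$ to the sandwich $\tfrac{r^{-K}}{A}TV_r(M)\le TV_r(M')\le Ar^K TV_r(M)$: the factors $Ar^{\pm K}$ contribute $\tfrac{2\pi}{r}(\log A\pm K\log r)=o(1)$, so the upper bound (valid for all odd $r$) yields $LTV(M')\le \mathrm{Vol}(M)$, while the lower bound (valid along the infinite set of $r$ coprime to the finite exceptional set $I$) yields $LTV(M')\ge \mathrm{Vol}(M)$, the latter because the hypothesis is a limit over all odd $r$ and hence descends to any infinite subsequence.

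Next I would establish the geometric half, namely $\mathrm{Vol}(M')=\mathrm{Vol}(M)$. The piece being glued — the Seifert fibered space $S$ in case (a), or the plumbed (hence graph) manifold $G$ in case (b) — has vanishing simplicial volume, since none of its geometric pieces is hyperbolic. As $M'$ is formed from $M$ and this piece by gluing along the single torus $T'$, and $\pi_1(T^2)\simeq \Z^2$ is amenable, the additivity of simplicial volume under gluing along tori applies and gives $\mathrm{Vol}(M')=\mathrm{Vol}(M)+\mathrm{Vol}(S)=\mathrm{Vol}(M)$ (respectively with $G$ in place of $S$). Equivalently, through the geometric decomposition: adjoining Seifert pieces creates no new hyperbolic pieces, so the sum of the volumes of the hyperbolic pieces is unchanged.

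Finally I would combine the two halves to read off $LTV(M')=\mathrm{Vol}(M)=\mathrm{Vol}(M')$, which is the claim. I do not expect a genuine obstacle here, since the hard analytic estimates are packaged into Theorem \ref{seifertglue} and Corollary \ref{graphglue}. The one point that merits care is the geometric half: one must ensure that gluing along $T'$ produces no hyperbolic geometry and that additivity of $\mathrm{Vol}$ is valid even when $T'$ fails to be incompressible in $M'$. This is exactly why I would appeal to the amenable (torus) formulation of Gromov's additivity theorem rather than to additivity along incompressible tori alone.
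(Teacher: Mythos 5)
Your quantum half is exactly the paper's argument: hypothesis (\ref{limit}) says the limsup defining $LTV(M)$ is a limit, so the final assertions of Theorem \ref{seifertglue} and Corollary \ref{graphglue} give $LTV(M')=LTV(M)=\mathrm{Vol}(M)$, and the whole content of the proof is the geometric identity $\mathrm{Vol}(M')=\mathrm{Vol}(M)$.

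In the geometric half there is a genuine gap. You invoke ``additivity of simplicial volume under gluing along tori'' in the amenable formulation precisely so as to cover the case where $T'$ is compressible, but Gromov's amenable gluing theorem only gives the \emph{equality} $\|M_1\cup_S M_2\|=\|M_1\|+\|M_2\|$ when the amenable surface $S$ is $\pi_1$-injective in both pieces; without incompressibility one gets only subadditivity. And equality genuinely fails in general: gluing a solid torus to a hyperbolic knot exterior along its boundary torus (compressible in the solid torus) produces $S^3$, whose simplicial volume is zero, not the volume of the knot complement. So your argument delivers $\mathrm{Vol}(M')\leq \mathrm{Vol}(M)+\mathrm{Vol}(S)=\mathrm{Vol}(M)$ but not the reverse inequality, which is the one you actually need to conclude $LTV(M')=\mathrm{Vol}(M')$ from $LTV(M')=\mathrm{Vol}(M)$. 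The paper closes this gap with a case analysis that you should reproduce: the torus $T'$ is automatically incompressible in $S$ (a Seifert fibered space with at least two boundary components is not a solid torus), so either (i) $T'$ is incompressible in $M$, hence in $M'$, and additivity along incompressible tori applies; or (ii) $T'$ is compressible in $M$, in which case $M=V\# M_0$ with $V$ a solid torus, so $M'=M_0\# S'$ where $S'=V\cup_{T'}S$ is still Seifert fibered, and additivity of simplicial volume under connected sum gives $\mathrm{Vol}(M')=\mathrm{Vol}(M_0)=\mathrm{Vol}(M)$. Your parenthetical remark about the geometric decomposition (adjoining Seifert pieces creates no new hyperbolic pieces) points toward the right conclusion, but making it rigorous requires exactly this compressible/incompressible dichotomy, since in the compressible case the prime decomposition of $M'$ changes.
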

\begin{proof} By Theorem \ref{seifertglue}, we have $LTV(M')=LTV(M)$ in the case of (a) and by Corollary  \ref{graphglue}, we have $LTV(M')=LTV(M)$ in the case of (b). 
So in both cases we only need to prove that $\mathrm{Vol}(M')=\mathrm{Vol}(M)$. We will discuss the details for (a). The proof of (b) is completely analogous.

The manifold $M'$ is the gluing of $M$ and the Seifert manifold $S$ along a torus $T$. Since $S$ has at least two boundary components, in particular it is not a solid torus and the torus $T$ is incompressible in $S$. There are thus two cases:
	
\textit{Case 1:} The torus $T$ is also incompressible in $M$. Then the torus $T$ is also incompressible in $M'$ and we have $\mathrm{Vol}(M')=\mathrm{Vol}(M)+\mathrm{Vol}(S)=\mathrm{Vol}(M)$ since the simplicial volume is  additive under gluing along an incompressible torus, and $S$ is a Seifert manifold.

\textit{Case 2:} The torus $T$ is compressible in $M$.  
Then $M$ is the connected sum 
of a solid torus $V$ and another 3-manifold $M_0$.
Since the simplicial volume is additive under disjoint union, connected sums we have $\mathrm{Vol}(M_0)=\mathrm{Vol}(M)$. 
Now we can obtain $M'$ as a connected sum $M'=M_0\# S'$ where $S'$ is obtained by gluing $S$ to the solid torus $V$. Since $S'$ is a Seifert fibered manifold, $\mathrm{Vol}(S'')=0$.
Again by  by additivity of the simplicial volume under connected sum (and disjoint unions), we have
$$\mathrm{Vol}(M')=\mathrm{Vol}(M_0)+ \mathrm{Vol}(S')=\mathrm{Vol}(M_0)=\mathrm{Vol}(M),$$
giving the desired result.
\end{proof}
\smallskip

\begin{remark}{\rm{
 Note that if $M$ in Theorem \ref{vc} has zero simplicial volume the conclusion of the theorem follows if the limit in Equation (\ref{limit}) is replaced by suplim. Indeed, Theorem
 \ref{seifertglue}  and Corollary \ref{graphglue} imply that if  $\mathrm{LTV}(M)=0$ then $\mathrm{LTV}(M')=0$. On the other hand, since the Gromov norm is subadditive under gluing 3-manifolds along tori
 gluing manifolds of simplicial volume zero produces volume zero manifolds. }}
  \end{remark}

Next we have two  results that
prove the volume conjecture for
Seifert fibered 3-manifolds with non-empty boundary and for large classes of graph manifolds. Our first result is the following:

\begin{named}{Corollary \ref{VCS}} Suppose that $S$ is  an oriented Seifert fibered 3-manifold that either has a non-empty boundary, or it is closed and admits an orientation reversing involution. Then we have
$$LTV(S)=\underset{r \rightarrow \infty, \ r \ \textrm{odd}}{\limsup} \frac {2\pi}  {r} \log |TV_r(S)|=\mathrm{Vol}(S)=0.$$
\end{named}
\begin{proof}
First suppose that $\partial S\neq \emptyset$.
Removing from $S$ the neighborhood of a regular fiber of $S$, which is a solid torus $M:=D^2\times S^1$, we obtain a Seifert manifold $S'$
that has at least two boundary components and one of them will be glued to  $\partial M$.
On the other hand, by Theorem \ref{thm:TV-RT}, we have
$$TV_r(M)=TV_r(D^2\times S^1)=RT_r(S^2\times S^1)=1,$$
and hence we obtain $\textit {LTV}(M)=\mathrm{Vol}(M)=0$. Now the result follows by part (a) of Theorem \ref{vc}.

Next suppose that $S$ is closed and there is an orientation reversing involution $i: S\longrightarrow S$. Then $S$ is the double  of a Seifert fibered manifold $S_1$, where $S_1$ has non empty boundary.
This is if we let $\bar{S_1}$ denote $S_1$ with the opposite orientation, then $S$ is obtained by identifying $\bar{S_1}$ and  $S_1$  along their boundary.
On one hand we have  $\mathrm{Vol}(S)=0=\mathrm{Vol}(S_1)$.
On the other hand, by Theorem \ref{thm:TV-RT},
$$TV_r(S,q^2)=||RT_r(S_1),q)||^2=TV_r(S_1,q^2),$$
and hence $LTV(S)=LTV(S_1)=0$.

\end{proof}

Now we turn to the second result that considers plumbed 3-manifolds.
\begin{cor}\label{Graph} Let $G$ be plumbed manifold with non-empty boundary and with an associated tree $T(G)$ where all but at most one leaf is a 3-manifold with at least one boundary component coming from $\partial G$.
Then,
$$\textit {LTV}(G) =\mathrm{Vol}(G)=0.$$

\end{cor}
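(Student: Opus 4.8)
The plan is to deduce the statement from part (b) of Theorem \ref{vc}, whose hypothesis requires a ``seed'' manifold $M$ with non-empty boundary for which the limit in Equation (\ref{limit}) genuinely exists and equals $\mathrm{Vol}(M)$, together with a plumbed piece satisfying the \emph{full} hypotheses of Corollary \ref{graphglue} (that is, \emph{every} leaf must carry a boundary component coming from the outer boundary). The only obstruction to invoking the gluing machinery directly is the one exceptional leaf permitted by the statement, whose Seifert piece need not meet $\partial G$. The key idea is to manufacture both the seed manifold and the missing boundary component at once, by removing a regular fiber from that exceptional leaf.

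Concretely, let $S_0$ denote the (at most one) leaf of $T(G)$ that need not have a boundary component in $\partial G$; if every leaf already meets $\partial G$, I would take $S_0$ to be any leaf. The vertex $S_0$ is a Seifert fibered piece, so it contains a regular fiber; let $M\cong D^2\times S^1$ be a fibered solid-torus neighborhood of such a fiber and set $G^\circ:=G\setminus \mathrm{int}(M)$. Removing $M$ alters neither the collection $\mathcal T$ of splitting tori nor the tree $T(G)$: it only replaces the Seifert piece $S_0$ by $S_0^\circ:=S_0\setminus\mathrm{int}(M)$, which is again Seifert fibered but now has one additional boundary torus $T_M:=\partial M$. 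Thus $G^\circ$ is again a plumbed manifold with tree $T(G)$, and $G$ is recovered as the manifold obtained by gluing $G^\circ$ along $T_M\subset\partial G^\circ$ to the boundary of $M$.

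First, $M=D^2\times S^1$ has non-empty boundary and, as computed in the proof of Corollary \ref{VCS}, $TV_r(M)=1$ for all odd $r$; hence the limit in Equation (\ref{limit}) exists and equals $0=\mathrm{Vol}(M)$. Next I would verify that $G^\circ$ meets the hypotheses of Corollary \ref{graphglue}: the exceptional leaf has become $S_0^\circ$, which now meets $\partial G^\circ$ along $T_M$, while every other leaf already met $\partial G\subset\partial G^\circ$ and is untouched, so \emph{every} leaf of $T(G^\circ)=T(G)$ carries a boundary component of $\partial G^\circ$. With $M'=G$ presented as the gluing of $G^\circ$ onto $M$, part (b) of Theorem \ref{vc} then yields $\mathrm{LTV}(G)=\mathrm{Vol}(G)=\mathrm{Vol}(M)=0$, as desired. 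The main point requiring care is precisely this topological bookkeeping --- checking that excising a regular fiber leaves the tree and the Seifert structures of all other pieces intact while supplying exactly the boundary component that the exceptional leaf was missing; the degenerate case in which $G$ is a single Seifert piece is handled by the same argument and reduces to Corollary \ref{VCS}.
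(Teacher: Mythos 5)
Your proof is correct and follows essentially the same route as the paper: the paper's own (very terse) proof is precisely to remove a fibered solid-torus neighborhood of a regular fiber from a leaf and then apply part (b) of Theorem \ref{vc} as in the proof of Corollary \ref{VCS}. You have simply spelled out the topological bookkeeping that the paper leaves implicit.
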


\begin{proof}  Remove from $G$  the neighborhood of a regular fiber of a leaf $S$. Then proceed as in the proof of Corollary \ref{VCS} using part (b) of Theorem \ref{vc}.
\end{proof}

\begin{remark} Some cases of  Corollary \ref{VCS} were also verified by the third author of this paper using using different methods \cite{Shashini}.
Note that in this  paper for the sequence of integers  $r\to \infty$ used to establish that $\textit {LTV}(S)=0$, $r$ is co-prime to the multiplicities of the exceptional fibers of Seifert fibrations.
In contrast to that,  in  \cite{Shashini} the sequence of integers $r\to \infty$ is when  $r$ is divisible by the multiplicities of all fibers.
\end{remark}
\medskip

\section{Hyperbolic cobordisms}
In the view of our results here it is reasonable to ask what is  the behavior of the TQFT operator maps for  cobordisms with non-zero simplicial volume.
For example,  let $M$ be a 3-manifold with two torus boundary components, $\partial M= T\cup T'$ whose interior admits a hyperbolic structure.  As before we get 
operators $RT_r(M):\ RT_r(M) \rightarrow RT_r(T')$. If   $RT_r(M)$ is invertible, one would hope that the operator norm $||| RT_r(S)^{-1}|||$ grows exponentially as $r\to \infty$. 
However, as we will see below this is not always the case

With $M $ as above, on  the torus $T'\subset \partial M$  take  a simple closed curve representing slope ${\bf s}$, and let $M({\bf s})$ denote the 3-manifold obtained by Dehn filling $M$ along ${\bf s}$.
If the  length of the geodesic representing ${\bf s}$ on $T'$ is large enough, then $M({\bf s})$ is also hyperbolic \cite{ThurstonGT3manifolds}.
However, for slopes represented by shorter the resulting manifold can be exceptional (i.e.  non-hyperbolic) and in particular  $M({\bf s})$ can be a Seifert fibered 3-manifold.
For example,  $M$ is the complement of the Whitehead link in $S^3$ then a Dehn filling along one of the components of $\partial M$ produces a solid torus which has volume $0.$
The next proposition shows that in these cases the operator norm $||| RT_r(M)^{-1}|||^{-1}$  grows at most polynomially.

\begin{prop}\label{notexpo}Let $M$ be a cobordim from $T$ to $T'$ as above, and suppose the the map $ RT_r(M)$ is invertible.
Suppose that $M$ admits a Dehn filling with slope $s$ along a component of  $\partial M$ so that $M(s)$ is a 3-manifold of zero simplicial volume. 
Then, the operator norm  $||| RT_r(M)^{-1}|||^{-1}$ grows at most polynomially.
\end{prop}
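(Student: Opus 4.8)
The plan is to relate the smallest singular value of $RT_r(M)$ to the Turaev--Viro invariant of the Dehn filled manifold $M(s)$, and then to exploit that $M(s)$ has zero simplicial volume. By Remark \ref{normstuff} we have $|||RT_r(M)^{-1}|||^{-1}=n(RT_r(M))$, so it is enough to bound $n(RT_r(M))$ above by a polynomial in $r$. Since $RT_r(M)$ and its adjoint $RT_r(M)^*$ have the same singular values, $n(RT_r(M))=n(RT_r(M)^*)=\min_{||y||=1}||RT_r(M)^*(y)||$.

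The key topological input is that $M(s)$ is obtained by gluing a solid torus $V_s\cong D^2\times S^1$ to $M$ along the filled boundary component, which we may take to be $T'$ after relabeling. Set $v_s:=RT_r(V_s)\in RT_r(T')$. Because $RT_r$ is a TQFT, tensor contraction along $T'$ identifies the vector $RT_r(M(s))\in RT_r(T)$ with $RT_r(M)^*(v_s)$, up to a conjugation that does not affect norms; hence $||RT_r(M(s))||=||RT_r(M)^*(v_s)||$. By Theorem \ref{thm:TV-RT} we have $||RT_r(M(s))||^2=TV_r(M(s))$ and $||v_s||^2=TV_r(D^2\times S^1)=1$. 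Taking the unit vector $y=v_s$ in the expression for $n(RT_r(M)^*)$ then yields the estimate $n(RT_r(M))\leqslant ||RT_r(M)^*(v_s)||=\sqrt{TV_r(M(s))}$. (If instead the filled component is $T$, one caps the source and obtains the symmetric bound $n(RT_r(M))\leqslant \sqrt{TV_r(M(s))}$ using $RT_r(M)$ in place of its adjoint.)

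To conclude, I would invoke that $TV_r(M(s))$ grows at most polynomially in $r$ whenever $M(s)$ has zero simplicial volume; combined with the inequality above this gives $|||RT_r(M)^{-1}|||^{-1}=n(RT_r(M))\leqslant \sqrt{TV_r(M(s))}$, which is polynomially bounded, as desired. I expect the main work to lie in justifying this polynomial growth. When $M(s)$ is Seifert fibered it is immediate from \cite[Theorem 5.2]{DKGromov}. In general $M(s)$ is a graph manifold, and I would argue by induction on the number of pieces in its torus decomposition: starting from the Seifert piece meeting $\partial M(s)$ and gluing on the remaining Seifert pieces one torus at a time, the upper inequality $TV_r(M')\leqslant Ar^K TV_r(M)$ of Theorem \ref{seifertglue} shows that each gluing multiplies the invariant by at most a polynomial factor, so the final invariant is polynomially bounded. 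The only other point requiring care is the precise form of the capping identity, namely the orientation and complex-conjugation conventions in the contraction along $T'$; since $n(A)=n(A^*)$, however, these conventions are immaterial for the norm estimate and do not affect the argument.
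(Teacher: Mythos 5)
Your proposal is correct and follows essentially the same route as the paper: both bound $|||RT_r(M)^{-1}|||^{-1}=n(RT_r(M))$ by evaluating on the unit vector associated to the filling solid torus, identify the result with $\|RT_r(M(\mathbf{s}))\|=\sqrt{TV_r(M(\mathbf{s}))}$, and then use polynomial growth of the Turaev--Viro invariants of zero-volume manifolds. The only differences are cosmetic: you establish $\|v_s\|=1$ via $TV_r(D^2\times S^1)=1$ where the paper uses unitarity of the quantum representation $\rho_r$ applied to $e_1$, and the polynomial bound on $TV_r(M(\mathbf{s}))$ that you flag as the ``main work'' and sketch by induction over the graph-manifold decomposition is simply cited in the paper as \cite[Theorem 11]{DKGromov}.
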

\begin{proof}

By assumption $M({\bf s})$ has boundary a single torus $T$ and $RT_r(M({\bf s}))$ is a vector in $RT_r(T)$.
Since $M({\bf s})$ has zero simplicial volume, 
 by \cite[Theorem 11]{DKGromov} its norm with respect to the  Hermitian pairing on $RT_r(T)$, the norm 
$||RT_r(M({\bf s}))||$ grows at most polynomially in $r$.

By Remark   \ref{normstuff} we have
\begin{equation} \label{norm1}
||| RT_r(M)^{-1}|||^{-1}=\underset{||x||=1}{\textrm{min}} ||RT_r(M)(x)||,
\end{equation}
where $x\in RT_r(T')$. 
On the other hand, by the TQFT properties, 
\begin{equation} \label{norm2}
RT_r(M({\bf s}))=RT_r(M)(e_r({\bf s})),
\end{equation}
 where $e_r({\bf s}) \in RT_r(T')$ is the vector the TQFT-functor $RT_r$ assigns to the solid torus where the meridian is the curve representing the slope ${\bf s}$.

We claim that $e_r({\bf s})$ is a vector of Hermitian norm 1. Indeed, $e_r({\bf s})$ is the $RT_r$-vector of a solid torus $D^2\times S^1$ but with the meridian of $D^2\times S^1$ identified with the curve of slope ${\bf s}$ on $T^2.$ Hence, it is the image of the basis vector $e_1$ introduced in Theorem \ref{thm:basis} by $\rho_r(\phi_{\bf s}),$ where $\rho_r$ is the quantum representation of  the mapping class group class group of $T^2$ (see Equation (\ref{repres}) in Section 2)
 and $\phi_s$ is any mapping class that sends the meridian of $T^2$ to the curve of slope ${\bf s}$. Since the image of the quantum representation $\rho_r$ consists only of unitary maps, $e_r({\bf s})$ has norm $1$. 

Now Equations (\ref{norm1}) and  (\ref{norm2}) and the discussion in the beginning of the proof imply
\begin{equation}\label{exponential} ||| RT_r(M)^{-1}|||^{-1}\leqslant ||RT_r(M({\bf s}))||\leqslant A\cdot r^N,
\end{equation}
for some constants $A$, $N>0$.
\end{proof}

\medskip

Note that the first part of inequality (\ref{exponential}) implies that if $||| RT_r(M)^{-1}|||^{-1}$ grows exponentially with $r$, then the invariants
$TV_r(M({\bf s}),q^2)=||RT_r(M({\bf s}),q)||^2$ grow exponentially. Tools that allow   to establish exponential growth of the Turaev-Viro invariants of Dehn fillings are highly desirable as they
will lead to progress on the volume conjecture as well as on another important conjecture in quantum topology; the AMU conjecture \cite{DKAdvances}.
We ask the following question:

\begin{pro} Construct examples of hyperbolic cobordisms $M: T\longrightarrow T'$ such that   $RT_r(M):\ RT_r(M) \rightarrow RT_r(T')$ is invertible and
$||| RT_r(S)^{-1}|||^{-1}$ grows exponentially with $r$.
\end{pro}

In the view of Proposition \ref{notexpo} one has to look at hyperbolic  cobordisms $M: T\longrightarrow T'$, such that all the 3-manifolds by filling one of the components of $\partial M$ have non-zero simplicial volume.
 One way to obtain such cobordisms is to consider complements  of two component highly twisted links in $S^3$ (see \cite{FKP} and references therein). In these cases all the Dehn fillings of either of the two 
boundary components produce hyperbolic 3-manifolds.

\bibliographystyle{abbrv}

\bibliography{biblio}

\end{document}